\theoremstyle{plain}                    
\newtheorem{thm}{Theorem}[section]
\newtheorem{rmk}[thm]{Remark}
\numberwithin{equation}{section}
\numberwithin{figure}{section}
\numberwithin{table}{section}
\newcommand\eref[1]{(\ref{#1})}
\newcommand*\xbar[1]{%
  \hbox{%
    \vbox{%
      \hrule height 0.5pt 
      \kern0.4ex
      \hbox{%
        \kern-0.05em
        \ensuremath{#1}%
        \kern-0.00em
      }%
    }%
  }%
}
\newcommand{\mc}[1]{{\mathcal{#1}}} 
\newcommand{\p}[1]{{\left( #1 \right)}}
\newcommand{\br}[1]{{\left[ #1 \right]}}
\newcommand{\abs}[1]{{\left| #1 \right|}}
\def\hf {\frac{1}{2}}
\newcommand{\kph}{{k+\frac{1}{2}}}
\newcommand{\kmh}{{k-\frac{1}{2}}}
\newcommand{\jph}{{j+\frac{1}{2}}}
\newcommand{\jmh}{{j-\frac{1}{2}}}
\newcommand{\dx}{\Delta x}
\newcommand{\dy}{\Delta y}
\title{A New Locally Divergence-Free Path-Conservative Central-Upwind Scheme for Ideal and Shallow Water Magnetohydrodynamics}
\author{Alina Chertock\thanks{Department of Mathematics and Center for Research in Scientific Computing, North Carolina State University,
Raleigh, NC 27695, USA; {\tt chertock@math.ncsu.edu}},~Alexander Kurganov\thanks{Department of Mathematics, SUSTech International Center for
Mathematics, and Guangdong Provincial Key Laboratory of Computational Science and Material Design, Southern University of Science and
Technology, Shenzhen 518055, China; {\tt alexander@sustech.edu.cn}},~Michael Redle\thanks{Department of Mathematics, North Carolina State
University, Raleigh, NC 27695, USA; {\tt mtredle@ncsu.edu}},~Kailiang Wu\thanks{Department of Mathematics and SUSTech International Center
for Mathematics, Southern University of Science and Technology, and National Center for Applied Mathematics Shenzhen (NCAMS), Shenzhen
518055, China; {\tt wukl@sustech.edu.cn}}}
\begin{document}
\date{}
\maketitle

\begin{abstract}
We develop a new second-order unstaggered path-conservative central-upwind (PCCU) scheme for ideal and shallow water magnetohydrodynamics
(MHD) equations. The new scheme possesses several important properties: it locally preserves the divergence-free constraint, it does not
rely on any (approximate) Riemann problem solver, and it robustly produces high-resolution and non-oscillatory results. The derivation of
the scheme is based on the Godunov-Powell nonconservative modifications of the studied MHD systems. The local divergence-free property is
enforced by augmenting the modified systems with the evolution equations for the corresponding derivatives of the magnetic field components.
These derivatives are then used to design a special piecewise linear reconstruction of the magnetic field, which guarantees a
non-oscillatory nature of the resulting scheme. In addition, the proposed PCCU discretization accounts for the jump of the nonconservative
product terms across cell interfaces, thereby ensuring stability. We test the proposed PCCU scheme on several benchmarks for both ideal and
shallow water MHD systems. The obtained numerical results illustrate the performance of the new scheme, its robustness, and its ability not
only to achieve high resolution, but also preserve the positivity of computed quantities such as density, pressure, and water depth.
\end{abstract}

\smallskip
\noindent
{\bf Keywords:} Ideal magnetohydrodynamics, shallow water magnetohydrodynamics, divergence-free constraints, path-conservative
central-upwind scheme, nonconservative hyperbolic systems of nonlinear PDEs.

\medskip
\noindent
{\bf AMS subject classification:} 65M08, 76W05, 76M12, 86-08, 35L65.

\section{Introduction}\label{sec1}
This paper focuses on developing a novel numerical method for magnetohydrodynamic (MHD) systems, widely used in many applications, such as
astrophysics, plasma physics, space physics, and engineering. In these models, fluid dynamics equations are coupled with the equations for
the magnetic field, which satisfies the divergence-free condition -- a physically-exact constraint, that is, if initially, the divergence of
the magnetic field is zero, then it must remain zero for all times. When deriving numerical methods for MHD systems, the divergence-free
condition must be handled with care, as neglecting an identically-zero divergence on a discrete level may lead to severe numerical
instabilities and/or nonphysical structures in the numerical solution; see, e.g.,
\cite{balsara1999staggered,Brackbill1980Effect,li2005locally,toth2000constraint}. In addition, like other hyperbolic systems of conservation
and balance laws, the MHD systems typically develop very complicated nonsmooth solution structures containing shock waves, rarefactions, and
contact discontinuities, as well as their interactions.

In the past decades, various numerical techniques have been developed to deal with the divergence-free constraint for MHD systems. An early
effort in this direction is the projection method \cite{Brackbill1980Effect}, which is a post-processing divergence correction procedure
that uses Hodge decomposition to project the non-divergence-free magnetic field into a divergence-free subspace by solving an elliptic
Poisson equation. Another widely used approach is the constrained transport (CT) method, which was proposed in \cite{evans1988simulation}
for simulating MHD flows. This method preserves a specific discrete version of divergence-free condition on staggered grids, and its
variants were further developed by researchers within various frameworks; see, e.g.,
\cite{devore1991flux,dai1998simple,ryu1998divergence,balsara1999staggered,londrillo2004divergence,gardiner2005unsplit,xu2011divergence}.
Unstaggered CT methods were also developed (see, e.g.,
\cite{rossmanith2006unstaggered,helzel2011unstaggered,mishra_tadmor_2012,helzel2013high,christlieb2014finite}), and they are usually based
on numerically evolving the magnetic potential and computing the divergence-free magnetic field through the (discrete) curl of the magnetic
potential. In addition, locally divergence-free discontinuous Galerkin methods that enforce the zero divergence of the magnetic field within
each cell were developed in \cite{li2005locally,Yakovlev2013locally}. In recent years, globally divergence-free high-order methods were
also proposed to enforce the exact zero divergence of the magnetic field within the finite-volume or (central) discontinuous Galerkin
framework; see, e.g.,
\cite{balsara2009divergence,balsara2009efficient,dumbser2019divergence,li2011central,li2012arbitrary,fu2018globally,balsara2021globally}. 

There is also a different class of schemes that reduce the divergence errors but do not explicitly enforce any divergence-free constraint.
In the context of the ideal MHD equations, these methods, typically referred to as eight-wave methods, were proposed by Powell
\cite{powell1995upwind,powell1997approximate,powell1999solution} based on a proper discretization of a modified, nonconservative ideal MHD
model. This model was first introduced by Godunov \cite{Godunov1972symmetric} for entropy symmetrization. Compared
to the conservative ideal MHD equations, the modified model contains extra nonconservative source terms (referred to as Godunov-Powell
source terms in the following), which are proportional to the divergence of the magnetic field. These source terms change the character of
the MHD equations, making the modified model Galilean invariant \cite{dellar2001note}, symmetrizable \cite{Godunov1972symmetric}, and helpful in designing entropy stable schemes (see, e.g., \cite{chandrashekar2016entropy,derigs2018entropy,liu2018entropy}). In
\cite{powell1997approximate}, it was also noticed that the conservative ideal MHD equations are weakly hyperbolic, and thus such source terms
should be added to recover the missing eigenvector. As demonstrated in \cite{powell1995upwind,powell1999solution}, the inclusion of the
source terms ensures that the magnetic divergence is advected with the flow, and the numerical divergence errors are also expected to be
advected and would not accumulate. This makes the eight-wave methods capable of controlling the divergence error, although certain drawbacks
may arise due to a nonconservative nature of the Godunov-Powell modified ideal MHD equations; see \cite{toth2000constraint}. As 
recently discovered in \cite{WuSINUM2018}, a discrete divergence-free condition is closely related to the positivity-preserving property of
numerical schemes for the ideal MHD equations. Furthermore, locally divergence-free positivity-preserving schemes
\cite{WuShu2018,wu2019provably} for the Godunov-Powell modified ideal MHD model can be obtained via geometric quasilinearization
\cite{WuShu2021GQL}. Another class of divergence-controlling schemes is the so-called hyperbolic divergence-cleaning method
\cite{dedner2002hyperbolic}, which introduces a mixed hyperbolic-parabolic equation to damp the divergence errors away instead of enforcing
an exactly divergence-free magnetic field. 

A variant of the MHD equations---known as the shallow water MHD system---has also become a model of significant numerical interest over the
last few decades; see, e.g., \cite{lahaye2022coherent,zeitlin2015geostrophic,petrosyan2020shallow} and references therein. First introduced
in the context of a solar tachocline in \cite{Gilman2000Magnetohydrodynamic} and now used in several astrophysical and geophysical contexts,
this variant is fully derived from the ideal MHD equations under the assumptions of constant density and magneto-hydrostatic equilibrium;
see \cite{Dellar2003Dispersive,Zei13}. An assortment of numerical methods have additionally been explored to treat divergence errors for
this system---such as space-time conservation element solution element methods (CE/SE) in \cite{Ahmed2019higher,Qamar2006Application}, an
evolution Galerkin scheme in \cite{Kroger2005evolution}, and entropy-stable schemes in \cite{Winters2016Entropy,Duan2021High}, to name a
few.

Despite these advances, devising highly accurate, stable, and at the same time, robust numerical methods capable of preserving the
divergence-free condition at a discrete level is still a challenging task. Our main goal is to develop such a scheme. To this end, we
consider the Godunov-Powell modified ideal and shallow water MHD models and supplemented them with additional equations obtained by
differentiating the magnetic field equations in space: the latter will help to ensure local divergence-free conditions. The resulting
augmented MHD systems will be nonconservative and rather complicated to be solved by an upwind numerical method, that is, by a method
relying on a solution of (generalized) Riemann problems. Instead, we numerically solve the augmented MHD systems using second-order
unstaggered finite-volume Riemann-problem-solver-free path-conservative central-upwind (PCCU) schemes, which were introduced in \cite{CKM}
as a black-box solver for nonconservative hyperbolic systems of PDEs. PCCU schemes are a path-conservative extension of the central-upwind
(CU) schemes, which were developed in \cite{kurganov2000new,kurganov2001semidiscrete,kurganov2007reduction} for general multidimensional
hyperbolic systems of conservation laws. We enforce the local divergence-free condition with the help of a special piecewise linear
reconstruction of the magnetic field variables. The resulting scheme produces highly accurate and non-oscillatory results for ideal and
shallow water MHD systems. 

The paper consists of two parts: In \S\ref{sec2}, we study the ideal MHD equations, while \S\ref{sec3} is devoted to the shallow water MHD
system. The Godunov-Powell modifications and the augmented forms of the studied systems are presented in \S\ref{sec21} and \S\ref{sec31},
the new numerical methods for the resulting augmented systems are introduced in \S\ref{sec22} and \S\ref{sec32}, and the results of the
preformed numerical experiments are reported in \S\ref{sec23} and \S\ref{sec33}. We conclude the paper and discuss some of our future
research plans in \S\ref{sec4}.

\section{Ideal MHD Equations}\label{sec2}
\subsection{Governing Equations}\label{sec21}
The ideal MHD equations read as
\allowdisplaybreaks
\begin{align}
&\rho_t+\nabla\cdot\p{\rho\bm u}=0,\nonumber\\
&(\rho\bm u)_t+\nabla\cdot\Big[\rho\bm u\bm u^\top+\Big(p+\hf\abs{\bm b}^2\Big)I-\bm b\bm b^\top\Big]=\bm0,\nonumber\\[-1ex]
&\label{2.1f}\\[-1.5ex]
&\bm b_t-\nabla\times(\bm u\times\bm b)=\bm0,\nonumber\\
&{\cal E}_t+\nabla\cdot\Big[\Big({\cal E}+p+\hf\abs{\bm b}^2\Big)\bm u-\bm b(\bm u\cdot\bm b)\Big]=0,\nonumber
\end{align}
where $t$ represents the time, $\rho$ is the density, $p$ is the pressure, $\bm u=(u,v,w)^\top$ represents the fluid velocity,
$\bm b=(b_1,b_2,b_3)^\top$ is the magnetic field, and ${\cal E}$ is the total energy. Additionally, $I$ is the $3\times3$ identity matrix,
$\gamma$ represents the ratio of specific heats, and finally, the system is completed through the equation of state (EOS)
\begin{equation}
{\cal E}=\frac{p}{\gamma-1}+\frac{\rho}{2}\abs{\bm u}^2+\hf\abs{\bm b}^2.
\label{2.6}
\end{equation}
where $|\cdot|$ represents the Euclidean norm. It is easy to show that
\begin{equation}
\nabla\cdot\bm b=0,
\label{2.5}
\end{equation} 
as long as initially the magnetic field is divergence-free.

As mentioned in \S\ref{sec1}, we follow a commonly-used approach and, instead of considering \eref{2.1f}, we develop a new numerical method
for the Godunov-Powell modified ideal MHD equations:
\begin{equation}
\begin{aligned}
&\rho_t+\nabla\cdot\p{\rho\bm u}=0,\\
&(\rho\bm u)_t+\nabla\cdot\Big[\rho\bm u\bm u^\top+\Big(p+\hf\abs{\bm b}^2\Big)I-\bm b\bm b^\top\Big]=-\bm b\p{\nabla\cdot\bm b},\\
&\bm b_t-\nabla\times(\bm u\times\bm b)=-\bm u\p{\nabla\cdot\bm b},\\
&{\cal E}_t+\nabla\cdot\Big[\Big({\cal E}+p+\hf\abs{\bm b}^2\Big)\bm u-\bm b(\bm u\cdot\bm b)\Big]=-\p{\bm u\cdot\bm b}\p{\nabla\cdot\bm b},
\end{aligned}
\label{2.4}
\end{equation}
which is closed with the help of the same EOS \eref{2.6}. Note that, theoretically, the Godunov-Powell source terms
$-\bm b\p{\nabla\cdot\bm b}$, $-\bm u\p{\nabla\cdot\bm b}$, and $-\p{\bm u\cdot\bm b}\p{\nabla\cdot\bm b}$ on the right hand side (RHS) of
\eref{2.4}, are zero due to the divergence-free condition \eref{2.5}. However, when numerically solving \eref{2.4} with EOS \eref{2.6},
these added relaxation terms help to reduce the divergence errors and enhance the robustness; see, e.g.,
\cite{powell1995upwind,powell1999solution,fuchs2011approximate,janhunen2000positive,waagan2011robust,WuShu2018,wu2019provably}. It is worth
noting that, although our proposed schemes are locally divergence-free, there are jumps of normal magnetic component across cell interfaces,
and the inclusion of these extra source terms can help to control the (weak) divergence errors at cell interfaces.

In this paper, we restrict our attention to the 2-D case, where all the quantities of interest depend on the spatial variables $x$ and
$y$ and time $t$ only. In this case, the divergence-free condition \eref{2.5} reads as $(b_1)_x+(b_2)_y=0$ and one of the goals in the
development of a good numerical method for the ideal MHD system \eref{2.4}, \eref{2.6} is to enforce this condition at the discrete level.
In order to achieve this goal, we introduce the new variables $A:=(b_1)_x$ and $B:=(b_2)_y$, and differentiate the induction equation for
$b_1$ and $b_2$ in \eref{2.4} with respect to $x$ and $y$, respectively, to obtain the following two evolution equations for $A$ and $B$:
\begin{equation}
\begin{aligned}
&A_t+\big(uA-b_2u_y\big)_x+\big(vA+b_1v_x\big)_y=0,\\
&B_t+\big(uB+b_2u_y\big)_x+\big(vB-b_1v_x\big)_y=0.
\end{aligned}
\label{2.8}
\end{equation}
From now on, we will add these equations to the Godunov-Powell modified ideal MHD equations and will numerically solve the augmented system
\eref{2.4}--\eref{2.8}, \eref{2.6}. Even though the number of equations to be discretized has been increased, adding the equations in
\eref{2.8} makes it easier to control the divergence-free constraint, which now reads as $A+B=0$.

Before introducing the numerical method, we write the augmented ideal MHD system \eref{2.4}--\eref{2.8} in the following vector form:
\begin{equation}
\bm U_t+\bm F(\bm U)_x+\bm G(\bm U)_y=Q(\bm U)\bm U_x+R(\bm U)\bm U_y,
\label{2.9}
\end{equation}
where $\bm U=(\rho,\rho u,\rho v,\rho w,b_1,b_2,b_3,\mc{E},A,B)^\top$,
\begin{align}
&\hspace*{-0.5cm}\bm F(\bm U)=
\begin{pmatrix}\rho u\\\rho u^2+p+\hf\abs{\bm b}^2-b_1^2\\\rho uv-b_1b_2\\\rho uw-b_1b_3\\0\\ub_2-vb_1\\ub_3-wb_1\\
\big({\cal E}+p+\hf\abs{\bm b}^2\big)u-(\bm u\cdot\bm b)b_1\\uA-b_2u_y\\uB+b_2u_y\end{pmatrix},~~
Q(\bm U)=\begin{pmatrix}0&0&0&0&0&0&0&0&0&0\\
0&0&0&0&-b_1&0&0&0&0&0\\
0&0&0&0&-b_2&0&0&0&0&0\\
0&0&0&0&-b_3&0&0&0&0&0\\
0&0&0&0&-u&0&0&0&0&0\\
0&0&0&0&-v&0&0&0&0&0\\
0&0&0&0&-w&0&0&0&0&0\\
0&0&0&0&-\bm u\cdot\bm b&0&0&0&0&0\\
0&0&0&0&0&0&0&0&0&0\\
0&0&0&0&0&0&0&0&0&0
\end{pmatrix},\nonumber\\
\label{2.10}\\
&\hspace*{-0.5cm}\bm G(\bm U)=
\begin{pmatrix}\rho v\\\rho uv-b_1b_2\\\rho v^2+p+\hf\abs{\bm b}^2-b_2^2\\\rho vw-b_2b_3\\vb_1-ub_2\\0\\vb_3-wb_2\\
\big({\cal E}+p+\hf\abs{\bm b}^2\big)v-(\bm u\cdot\bm b)b_2\\vA+b_1v_x\\vB-b_1v_x,\end{pmatrix},~~
R(\bm U)=\begin{pmatrix}0&0&0&0&0&0&0&0&0&0\\
0&0&0&0&0&-b_1&0&0&0&0\\
0&0&0&0&0&-b_2&0&0&0&0\\
0&0&0&0&0&-b_3&0&0&0&0\\
0&0&0&0&0&-u&0&0&0&0\\
0&0&0&0&0&-v&0&0&0&0\\
0&0&0&0&0&-w&0&0&0&0\\
0&0&0&0&0&-\bm u\cdot\bm b&0&0&0&0\\
0&0&0&0&0&0&0&0&0&0\\
0&0&0&0&0&0&0&0&0&0
\end{pmatrix}.\nonumber
\end{align}

\subsection{Numerical Method}\label{sec22}
We introduce a uniform Cartesian mesh consisting of the finite-volume cells $C_{j,k}=[x_\jmh,x_\jph]\times[y_\kmh,y_\kph]$ with
$x_\jph-x_\jmh\equiv\dx$ and $y_\kph-y_\kmh\equiv\dy$. We assume that at a certain time level $t$, the computed solution realized in terms
of its cell averages,
$$
\xbar{\bm U}_{j,k}\approx\frac{1}{\dx\dy}\iint\limits_{C_{j,k}}\bm U(x,y,t)\,{\rm d}x\,{\rm d}y,
$$
is available. Notice that the dependence of $\,\xbar{\bm U}_{j,k}$ and many other indexed quantities on $t$ is omitted here and throughout
the rest of the paper for the sake of brevity.

The cell averages $\,\xbar{\bm U}_{j,k}$ are evolved in time by implementing a dimension-by-dimension extension of the PCCU scheme from
\cite{CKM}, which results in the following semi-discretization of \eref{2.9}--\eref{2.10}:
\allowdisplaybreaks
\begin{align}
\frac{{\rm d}}{{\rm d}t}\,\xbar{\bm U}_{j,k}=&-\frac{1}{\dx}\bigg[\bm{{\cal F}}_{\jph,k}-\bm{{\cal F}}_{\jmh,k}-\bm Q_{j,k}\nonumber\\
&\hspace*{1.3cm}-\frac{s_{\jmh,k}^+}{s_{\jmh,k}^+-s_{\jmh,k}^-}\,\bm Q_{\bm\Psi,\jmh,k}
+\frac{s_{\jph,k}^-}{s_{\jph,k}^+-s_{\jph,k}^-}\,\bm Q_{\bm\Psi,\jph,k}\bigg]\nonumber\\[-1ex]
&\label{2.13}\\[-1.5ex]
&-\frac{1}{\dy}\bigg[\bm{{\cal G}}_{j,\kph}-\bm{{\cal G}}_{j,\kmh}-\bm R_{j,k}\nonumber\\
&\hspace*{1.3cm}-\frac{s_{j,\kmh}^+}{s_{j,\kmh}^+-s_{j,\kmh}^-}\,\bm R_{\bm\Psi,j,\kmh}
+\frac{s_{j,\kph}^-}{s_{j,\kph}^+-s_{j,\kph}^-}\,\bm R_{\bm\Psi,j,\kph}\bigg].\nonumber
\end{align}
Here,
\begin{equation}
\begin{aligned}
\bm{{\cal F}}_{\jph,k}&=\frac{s_{\jph,k}^+\bm F\big(\bm U^{\rm E}_{j,k}\big)-s_{\jph,k}^-\bm F\big(\bm U^{\rm W}_{j+1,k}\big)}
{s_{\jph,k}^+-s_{\jph,k}^-}+
\frac{s_{\jph,k}^+s_{\jph,k}^-}{s_{\jph,k}^+-s_{\jph,k}^-}\left(\bm U^{\rm W}_{j+1,k}-\bm U^{\rm E}_{j,k}\right),\\[0.5ex]
\bm{{\cal G}}_{j,\kph}&=\frac{s_{j,\kph}^+\bm G\big(\bm U^{\rm N}_{j,k}\big)-s_{j,\kph}^-\bm G\big(\bm U^{\rm S}_{j,k+1}\big)}
{s_{j,\kph}^+-s_{j,\kph}^-}+
\frac{s_{j,\kph}^+s_{j,\kph}^-}{s_{j,\kph}^+-s_{j,\kph}^-}\left(\bm U^{\rm S}_{j,k+1}-\bm U^{\rm N}_{j,k}\right)
\end{aligned}
\label{2.18}
\end{equation}
are the CU numerical fluxes from \cite{KTrp}, $\bm U^{\rm E,\rm W,\rm N,\rm S}_{j,k}$ are the reconstructed point values at the cell
interfaces of cell $C_{j,k}$ (see \S\ref{sec221} for details), $s_{\jph,k}^\pm$ and $s_{j,\kph}^\pm$ are the one-sided local speeds of
propagation in the $x$- and $y$-directions, respectively (see \S\ref{sec222} for details), and $\bm Q_{j,k}$, $\bm R_{j,k}$,
$\bm Q_{\bm\Psi,\jph,k}$, and $\bm R_{\bm\Psi,j,\kph}$ denote the discretizations of the nonconservative products on the RHS of \eref{2.9}
(see \S\ref{sec223} for details).

We point out that \eref{2.13} is a system of ODEs, which should be numerically integrated in time by an appropriate ODE solver. In the
numerical experiments reported in \S\ref{sec23}, we have used the explicit three-stage third-order strong stability preserving (SSP)
Runge-Kutta method; see, e.g., \cite{Gottlieb2001Strong,Gottlieb2011Strong}.

\subsubsection{Piecewise Linear Reconstruction}\label{sec221}
Equipped with the cell averages $\,\xbar{\bm U}_{j,k}$, we first use the EOS \eref{2.6} and compute the approximate point values of $u$,
$v$, $w$, and $p$ at the cell centers:
$$
\begin{aligned}
&u_{j,k}=\frac{(\xbar{\rho u})_{j,k}}{\xbar\rho_{j,k}},\quad v_{j,k}=\frac{(\xbar{\rho v})_{j,k}}{\xbar\rho_{j,k}},\quad
w_{j,k}=\frac{(\xbar{\rho w})_{j,k}}{\xbar\rho_{j,k}},\\
&p_{j,k}=\p{\gamma-1}\bigg[\,\xbar{\cal E}_{j,k}-\hf\,\xbar\rho_{j,k}\left(u_{j,k}^2+v_{j,k}^2+w_{j,k}^2\right)-
\hf\Big((\xbar{b_1})_{j,k}^2+(\xbar{b_2})_{j,k}^2+(\xbar{b_3})_{j,k}^2\Big)\bigg].
\end{aligned}
$$
We then introduce a new set of discrete variables.
$$
\bm W_{j,k}:=(\,\xbar\rho_{j,k},u_{j,k},v_{j,k},w_{j,k},(\xbar{b_1})_{j,k},(\xbar{b_2})_{j,k},(\xbar{b_3})_{j,k},p_{j,k},\xbar A_{j,k},
\xbar B_{j,k})^\top,
$$
and compute the cell interface point values $\bm W^{\rm E,\rm W,\rm N,\rm S}_{j,k}$ using a proper conservative piecewise linear
reconstruction
\begin{equation}
\widetilde{\bm W}(x,y)=\bm W_{j,k}+(\bm W_x)_{j,k}(x-x_j)+(\bm W_y)_{j,k}(y-y_k),\quad(x,y)\in C_{j,k},
\label{2.14}
\end{equation}
which results in
\begin{equation}
\begin{aligned}
&\bm W^{\rm E}_{j,k}=\bm W_{j,k}+(\bm W_x)_{j,k}\frac{\dx}{2},\quad\bm W^{\rm W}_{j,k}=\bm W_{j,k}-(\bm W_x)_{j,k}\frac{\dx}{2},\\
&\bm W^{\rm N}_{j,k}=\bm W_{j,k}+(\bm W_y)_{j,k}\frac{\dy}{2},\quad\bm W^{\rm S}_{j,k}=\bm W_{j,k}-(\bm W_y)_{j,k}\frac{\dy}{2}.
\end{aligned}
\label{2.15}
\end{equation}

In order for \eref{2.14} to be second-order accurate, the slopes $(\bm W_x)_{j,k}$ and $(\bm W_y)_{j,k}$ have to be at least first-order
approximations of $\bm W_x(x_j,y_k)$ and $\bm W_y(x_j,y_k)$, respectively. A non-oscillatory nature of the piecewise linear reconstruction
\eref{2.14} is typically ensured with the help of a nonlinear limiter. To all of the components of $\bm W$, we compute the slopes (except for
$((b_1)_x)_{j,k}$ and $((b_2)_y)_{j,k}$) using the generalized minmod limiter (see, e.g.,
\cite{Lie2003artificial,Nessyahu1990Non,Sweby1984High}):
\begin{equation}
\begin{aligned}
&(W^{(i)}_x)_{j,k}={\rm minmod}\left(\theta\,\frac{W^{(i)}_{j,k}-W^{(i)}_{j-1,k}}{\dx},\,\frac{W^{(i)}_{j+1,k}-W^{(i)}_{j-1,k}}{2\dx},\,
\theta\,\frac{W^{(i)}_{j+1,k}-W^{(i)}_{j,k}}{\dx}\right),\quad i\ne5,\\
&(W^{(i)}_y)_{j,k}={\rm minmod}\left(\theta\,\frac{W^{(i)}_{j,k}-W^{(i)}_{j,k-1}}{\dy},\,\frac{W^{(i)}_{j,k+1}-W^{(i)}_{j,k-1}}{2\dy},\,
\theta\,\frac{W^{(i)}_{j,k+1}-W^{(i)}_{j,k}}{\dy}\right),\quad i\ne6,
\end{aligned}
\label{2.16}
\end{equation}
where the minmod function is defined by
\begin{equation}
{\rm minmod}(a_1,a_2,\dots)= 
\begin{cases}
{\rm sgn}(a_k)\min\{\abs{a_1},\abs{a_2},\dots\}&\mbox{if}~{\rm sgn}(a_k)=1~\mbox{or}~{\rm sgn}(a_k)=-1~\forall k,\\
0&\mbox{otherwise}.
\end{cases}
\label{2.17f}
\end{equation}

The slopes $((b_1)_x)_{j,k}$ and $((b_2)_y)_{j,k}$, however, should not be computed using the generalized minmod limiter or any other
conventional limiter as our goal is to enforce local divergence-free condition \eref{2.5}, which at the discrete level reads as
$((b_1)_x)_{j,k}+((b_2)_y)_{j,k}\equiv0$ for all $j,k$. This goal can be achieved if we set 
\begin{equation}
((b_1)_x)_{j,k}=\,\xbar A_{j,k}\quad\mbox{and}\quad((b_1)_y)_{j,k}=\,\xbar B_{j,k},
\label{2.18f}
\end{equation}
since
\begin{equation}
\xbar A_{j,k}+\,\xbar B_{j,k}=0
\label{2.16f}
\end{equation}
is true for all $j,k$ provided \eref{2.16f} is satisfied at time $t=0$; see Theorem \ref{th21} in \S\ref{sec224}. 

While the use of \eref{2.18f} guarantees the local discrete divergence-free condition, the resulting reconstruction of $b_1$ and $b_2$ may
be oscillatory in the $x$- and $y$-directions, respectively. As we have observed in several numerical experiments, this often leads
to an oscillatory numerical solution. We, therefore, adjust the slopes in \eref{2.18f} by scaling them as follows. 

We begin by introducing the auxiliary slopes $(\widehat{(b_1)}_x)_{j,k}$ and $(\widehat{(b_2)}_y)_{j,k}$, which are computed using the
aforementioned generalized minmod reconstruction. The reconstructions of $b_1$ and $b_2$ can then be made both non-oscillatory and 
locally divergence-free by replacing \eref{2.18f} with
\begin{equation}
((b_1)_x)_{j,k}=\sigma_{j,k}\,\xbar A_{j,k},\quad((b_1)_y)_{j,k}=\sigma_{j,k}\,\xbar B_{j,k},
\label{2.20f}
\end{equation}
where 
\begin{equation}
\sigma_{j,k}=\min\Big\{1,\sigma^x_{j,k},\sigma^y_{j,k}\Big\}
\label{2.21f}
\end{equation}
and the scaling factors $\sigma^x_{j,k}$ and $\sigma^y_{j,k}$ are computed by
\begin{equation}
\sigma^x_{j,k}:=\left\{\begin{aligned}
&\min\bigg\{1,\frac{(\widehat{(b_1)}_x)_{j,k}}{\xbar A_{j,k}}\bigg\}&&\mbox{if}~(\widehat{(b_1)}_x)_{j,k}\,\xbar A_{j,k}>0,\\
&0&&\mbox{otherwise},
\end{aligned}\right.
\label{2.22f}
\end{equation}
and 
\begin{equation}
\sigma^y_{j,k}:=\left\{\begin{aligned}
&\min\bigg\{1,\frac{(\widehat{(b_2)}_y)_{j,k}}{\xbar B_{j,k}}\bigg\}&&\mbox{if}~(\widehat{(b_2)}_y)_{j,k}\,\xbar B_{j,k}>0,\\
&0&&\mbox{otherwise}.
\end{aligned}\right.
\label{2.23f}
\end{equation}

Finally, equipped with \eref{2.15}, we use the EOS \eref{2.6} to compute the cell interface point values
${\cal E}^{\rm E,\rm W,\rm N,\rm S}_{j,k}$ as follows:
\begin{equation}
{\cal E}_{j,k}^\ell=\frac{p_{j,k}^\ell}{\gamma-1}+\hf\rho^\ell_{j,k}\left[(u^\ell_{j,k})^2+(v^\ell_{j,k})^2+(w^\ell_{j,k})^2\right]+
\hf\left[\p{(b_1)^\ell_{j,k}}^2+\p{(b_2)^\ell_{j,k}}^2+\p{(b_3)^\ell_{j,k}}^2\right],
\label{2.20}
\end{equation}
where $\ell\in\{{\rm E},{\rm W},{\rm N},{\rm S}\}$.
\begin{rmk}\label{rem21}
We note that we have reconstructed the primitive variables $u$, $v$, $w$, and $p$ rather than the conservative variables $\rho u$, $\rho v$,
$\rho w$, and ${\cal E}$ since our numerical experiments clearly indicate that the resulting scheme, which is based on the reconstruction of
the primitive variables, is less oscillatory and produces no negative pressure values.
\end{rmk}

It is important to point out that the ninth and tenth components of the fluxes $\bm F(\bm U^{\rm E(W)}_{j,k})$ and
$\bm G(\bm U^{\rm N(S)}_{j,k})$ depend not only on the corresponding point values of $u$, $v$, $A$, $B$, $b_1$, and $b_2$, but also on the
point values of the derivatives $(u_y)^{\rm E(W)}_{j,k}$ and $(v_x)^{\rm N(S)}_{j,k}$. We compute these values using the first-order
approximation, namely, we set
$$
(u_y)^{\rm E}_{j,k}=(u_y)_{j,k},\quad(u_y)^{\rm W}_{j,k}=(u_y)_{j,k},\quad(v_x)^{\rm N}_{j,k}=(v_x)_{j,k},\quad
(v_x)^{\rm S}_{j,k}=(v_x)_{j,k},
$$
where the slopes $(u_y)_{j,k}$ and $(v_x)_{j,k}$ are computed by \eref{2.16}. Notice that even though this will result in the first-order
approximation of the auxiliary variables $A$ and $B$, the other components of $\bm U$ will be still computed with the second order and thus
the second-order accuracy of the resulting scheme will not be affected.

\subsubsection{One-Sided Speeds of Propagation}\label{sec222}
Equipped with the reconstructed point values \eref{2.15} and \eref{2.20}, we now proceed with the computation of the one-sided local speeds
of propagation $s^\pm_{\jph,k}$ and $s^\pm_{j,\kph}$ seen in \eref{2.13} and \eref{2.18}. We stress that when the PCCU schemes are applied
to general nonconservative systems of type \eref{2.9}, the $x$- and $y$-directional speeds would typically be estimated using the largest
and smallest eigenvalues of the matrices $\frac{\partial\bm F}{\partial\bm U}(\bm U)-Q(\bm U)$ and
$\frac{\partial\bm G}{\partial\bm U}(\bm U)-R(\bm U)$, respectively. However, it is known (see, e.g., \cite{einfeldt1991godunov}) that in
the context of the ideal MHD system \eref{2.9}--\eref{2.10}, the estimates, which are solely based on the eigenvalues mentioned above may be
inaccurate and using them may lead to severe instabilities.

We, therefore, follow \cite{wu2019provably}, where the propagation speeds were slightly overestimated to ensure the positivity of both the
computed density and pressure, and estimate the right- and left-sided local speeds in the $x$-direction by
\begin{equation*}
\begin{aligned}
&s_{\jph,k}^+=\max\Big\{\max\big\{u_{j,k}^{\rm E},u_{\jph,k}^{\rm Roe}\big\}+c_{j,k}^{\rm E}+\beta^x_{\jph,k},\,
\max\big\{u_{j+1,k}^{\rm W},u_{\jph,k}^{\rm Roe}\big\}+c_{j+1,k}^{\rm W}+\beta^x_{\jph,k},\,0\Big\},\\
&s_{\jph,k}^-=\min\Big\{\min\big\{u_{j,k}^{\rm E},u_{\jph,k}^{\rm Roe}\big\}-c_{j,k}^{\rm E}-\beta^x_{\jph,k},\,
\min\big\{u_{j+1,k}^{\rm W},u_{\jph,k}^{\rm Roe}\big\}-c_{j+1,k}^{\rm W}-\beta^x_{\jph,k},\,0\Big\},
\end{aligned}
\end{equation*}
where 
\begin{equation*}
u_{\jph,k}^{\rm Roe}:=\frac{u_{j,k}^{\rm E}\sqrt{\rho_{j,k}^{\rm E}}+u_{j+1,k}^{\rm W}\sqrt{\rho_{j+1,k}^{\rm W}}}
{\sqrt{\rho_{j,k}^{\rm E}}+\sqrt{\rho_{j+1,k}^{\rm W}}},\quad
\beta^x_{\jph,k}:=\frac{\abs{\bm b_{j,k}^{\rm E}-\bm b_{j+1,k}^{\rm W}}}{\sqrt{\rho_{j,k}^{\rm E}}+\sqrt{\rho_{j+1,k}^{\rm W}}},
\end{equation*}
and $c^{\rm E(W)}_{j,k}$ are the fast magneto-acoustic wave speeds computed using
\begin{equation*}
\left(c_{j,k}^{\rm E(W)}\right)^2=\frac{1}{2\rho_{j,k}^{\rm E(W)}}\br{\gamma p_{j,k}^{\rm E(W)}+\abs{\bm b_{j,k}^{\rm E(W)}}^2+
\sqrt{\p{\gamma p_{j,k}^{\rm E(W)}+\abs{\bm b_{j,k}^{\rm E(W)}}^2}^2-4\gamma p_{j,k}^{\rm E(W)}\p{\p{b_1}_{j,k}^{\rm E(W)}}^2}~}.
\end{equation*}

Similarly, we estimate the corresponding $y$-directional speeds by
\begin{equation*}
\begin{aligned}
&s_{j,\kph}^+=\max\Big\{\max\big\{v_{j,k}^{\rm N},v_{j,\kph}^{\rm Roe}\big\}+c_{j,k}^{\rm N}+\beta^y_{j,\kph},\,
\max\big\{v_{j,k+1}^{\rm S},v_{j,\kph}^{\rm Roe}\big\}+c_{j,k+1}^{\rm S}+\beta^y_{j,\kph},\,0\Big\},\\
&s_{j,\kph}^-=\min\Big\{\min\big\{v_{j,k}^{\rm N},v_{j,\kph}^{\rm Roe}\big\}-c_{j,k}^{\rm N}-\beta^y_{j,\kph},\,
\min\big\{v_{j,k+1}^{\rm S},v_{j,\kph}^{\rm Roe}\big\}-c_{j,k+1}^{\rm S}-\beta^y_{j,\kph},\,0\Big\},
\end{aligned}
\end{equation*}
where 
\begin{equation*}
v_{j,\kph}^{\rm Roe}:=\frac{v_{j,k}^{\rm N}\sqrt{\rho_{j,k}^{\rm N}}+v_{j,k+1}^{\rm S}\sqrt{\rho_{j,k+1}^{\rm S}}}
{\sqrt{\rho_{j,k}^{\rm N}}+\sqrt{\rho_{j,k+1}^{\rm S}}},\quad
\beta^y_{j,\kph}:=\frac{\abs{\bm b_{j,k}^{\rm N}-\bm b_{j,k+1}^{\rm S}}}{\sqrt{\rho_{j,k}^{\rm N}}+\sqrt{\rho_{j,k+1}^{\rm S}}},
\end{equation*}
\begin{equation*}
\left(c_{j,k}^{\rm N(S)}\right)^2=\frac{1}{2\rho_{j,k}^{\rm N(S)}}\br{\gamma p_{j,k}^{\rm N(S)}+\abs{\bm b_{j,k}^{\rm N(S)}}^2+
\sqrt{\p{\gamma p_{j,k}^{\rm N(S)}+\abs{\bm b_{j,k}^{\rm N(S)}}^2}^2-4\gamma p_{j,k}^{\rm N(S)}\p{\p{b_1}_{j,k}^{\rm N(S)}}^2}~}.
\end{equation*}

\subsubsection{Discretization of the Nonconservative Products}\label{sec223}
In this section, we provide the computation of the nonconservative product terms in \eref{2.13}.

Following \cite{CKM} (see also \cite{CKN22}), we obtain nonconservative terms in the $x$-direction, $\bm Q_{j,k}$ and
$\bm Q_{\bm\Psi,\jph,k}$, as follows. First, in order to compute the term $\bm Q_{j,k}$, we take a global (in space) interpolant
$\bm U(\widetilde{\bm W}(x,y))$, where $\widetilde{\bm W}$ is given by \eref{2.14}, and evaluate the integral in
\begin{equation*}
\bm Q_{j,k}=\int\limits_{x_\jmh}^{x_\jph}Q\big(\bm U\big(\widetilde{\bm W}(x,y_k)\big)\big)\bm U(\widetilde{\bm W}(x,y_k))_x\,{\rm d}x,
\end{equation*}
where $Q(\bm U)$ is defined in \eref{2.10}, exactly. This results in the following expressions for the ten components of the vector 
$\bm Q_{j,k}$:
\allowdisplaybreaks
\begin{align*}
&Q^{(1)}_{j,k}=Q^{(9)}_{j,k}=Q^{(10)}_{j,k}=0,\\
&Q^{(i)}_{j,k}=-\int\limits_{x_\jmh}^{x_\jph}\widetilde{b_{i-1}}(x,y_k)((b_1)_x)_{j,k}\,{\rm d}x=
-(\,\xbar{b_{i-1}})_{j,k}\sigma_{j,k}\,\xbar A_{j,k}\dx,\quad i=2,3,4,\\
&\big(Q^{(5)}_{j,k},Q^{(6)}_{j,k},Q^{(7)}_{j,k}\big)^\top=-\int\limits_{x_\jmh}^{x_\jph}\widetilde{\bm u}(x,y_k)((b_1)_x)_{j,k}\,{\rm d}x=
-\bm u_{j,k}\sigma_{j,k}\,\xbar A_{j,k}\dx,\\
&Q^{(8)}_{j,k}=-\int\limits_{x_\jmh}^{x_\jph}\widetilde{\bm u}(x,y_k)\cdot\widetilde{\bm b}(x,y_k)((b_1)_x)_{j,k}\,{\rm d}x=
-\Big[\Big(\bm u_{j,k}\cdot\bm b_{j,k}\\
&\hspace*{1.8cm}+\frac{(\dx)^2}{12}
\left\{(u_x)_{j,k}\sigma_{j,k}\,\xbar A_{j,k}+(v_x)_{j,k}((b_2)_x)_{j,k}+(w_x)_{j,k}((b_3)_x)_{j,k}\right\}
\Big)\Big]\sigma_{j,k}\,\xbar A_{j,k}\dx,
\end{align*}
where we have used the slopes $((b_1)_x)_{j,k}$ given by \eref{2.20f}--\eref{2.23f}, while the other slopes are computed in
\eref{2.16}--\eref{2.17f}.

Next, the terms $\bm Q_{\bm\Psi,\jph,k}$ are computed by the exact integration of
\begin{equation*}
\bm Q_{\bm\Psi,\jph,k}=\int\limits_0^1Q\big(\bm U\big(\bm\Psi_{\jph,k}(s)\big)\big)\bm\Psi_{\jph,k}'(s)\,{\rm d}s,
\end{equation*}
where $\bm\Psi_{\jph,k}(s)$ is a linear path connecting the states $\bm W^{\rm E}_{j,k}$ and $\bm W^{\rm W}_{j+1,k}$:
\begin{equation*}
\bm\Psi_{\jph,k}(s)=\bm W^{\rm E}_{j,k}+s\p{\bm W^{\rm W}_{j+1,k}-\bm W^{\rm E}_{j,k}}.
\end{equation*}
This results in
\allowdisplaybreaks
\begin{align*}
&Q^{(1)}_{\bm\Psi,\jph,k}=Q^{(9)}_{\bm\Psi,\jph,k}=Q^{(10)}_{\bm\Psi,\jph,k}=0,\\
&Q^{(i)}_{\bm\Psi,\jph,k}=-\int\limits_0^1\left\{(b_{i-1})^{\rm E}_{j,k}+s\p{(b_{i-1})^{\rm W}_{j+1,k}-(b_{i-1})^{\rm E}_{j,k}}\right\}
[b_1]_{\jph,k}\,{\rm d}s\\
&\hspace*{1.55cm}=-\hf\left((b_{i-1})^{\rm E}_{j,k}+(b_{i-1})^{\rm W}_{j+1,k}\right)[b_1]_{\jph,k},\quad i=2,3,4,\\
&\big(Q^{(5)}_{\bm\Psi,\jph,k},Q^{(6)}_{\bm\Psi,\jph,k},Q^{(7)}_{\bm\Psi,\jph,k}\big)^\top=
-\int\limits_0^1\left\{\bm u^{\rm E}_{j,k}+s\p{\bm u^{\rm W}_{j+1,k}-\bm u^{\rm E}_{j,k}}\right\}[b_1]_{\jph,k}\,{\rm d}s\\
&\hspace*{5.6cm}=-\hf\left(\bm u^{\rm E}_{j,k}+\bm u^{\rm W}_{j+1,k}\right)[b_1]_{\jph,k},\\
&Q^{(8)}_{\bm\Psi,\jph,k}=-\int\limits_0^1\left\{\bm u^{\rm E}_{j,k}+s\p{\bm u^{\rm W}_{j+1,k}-\bm u^{\rm E}_{j,k}}\right\}\cdot
\left\{\bm b^{\rm E}_{j,k}+s\p{\bm b^{\rm W}_{j+1,k}-\bm b^{\rm E}_{j,k}}\right\}[b_1]_{\jph,k}\,{\rm d}s\\
&\hspace*{1.55cm}=-\frac{1}{6}\Big(2\bm u_{j,k}^{\rm E}\cdot\bm b_{j,k}^{\rm E}+\bm u_{j,k}^{\rm E}\cdot\bm b_{j+1,k}^{\rm W}+
\bm u_{j+1,k}^{\rm W}\cdot\bm b_{j,k}^{\rm E}+2\bm u_{j+1,k}^{\rm W}\cdot\bm b_{j+1,k}^{\rm W}\Big)[b_1]_{\jph,k},
\end{align*}
where $[b_1]_{\jph,k}:=(b_1)^{\rm W}_{j+1,k}-(b_1)^{\rm E}_{j,k}$.

Similarly, we obtain the following formulae for the nonconservative terms in the $y$-direction, $\bm R_{j,k}$ and $\bm R_{\bm\Psi,j,\kph}$:
\allowdisplaybreaks
\begin{align*}
&R^{(1)}_{j,k}=R^{(9)}_{j,k}=R^{(10)}_{j,k}=0,\quad R^{(i)}_{j,k}=-(\,\xbar{b_{i-1}})_{j,k}\sigma_{j,k}\,\xbar B_{j,k}\dy,\quad i=2,3,4,\\
&\big(R^{(5)}_{j,k},R^{(6)}_{j,k},R^{(7)}_{j,k}\big)^\top=-\bm u_{j,k}\sigma_{j,k}\,\xbar B_{j,k}\dy,\\
&R^{(8)}_{j,k}=-\Big[\Big(\bm u_{j,k}\cdot\bm b_{j,k}
+\frac{(\dy)^2}{12}\left\{(u_y)_{j,k}((b_1)_y)_{j,k}+(v_y)_{j,k}\sigma_{j,k}\,\xbar B_{j,k}+(w_y)_{j,k}((b_3)_y)_{j,k}\right\}
\Big)\Big]\sigma_{j,k}\,\xbar B_{j,k}\dy,
\end{align*}
where we have used the slopes $((b_2)_y)_{j,k}$ given by \eref{2.20f}--\eref{2.23f}, while the other slopes are computed in
\eref{2.16}--\eref{2.17f}, and
\allowdisplaybreaks
\begin{align*}
&R^{(1)}_{\bm\Psi,j,\kph}=R^{(9)}_{\bm\Psi,j,\kph}=R^{(10)}_{\bm\Psi,j,\kph}=0,\quad
R^{(i)}_{\bm\Psi,j,\kph}=-\hf\left((b_{i-1})^{\rm N}_{j,k}+(b_{i-1})^{\rm S}_{j,k+1}\right)[b_2]_{j,\kph},\quad i=2,3,4,\\
&\big(R^{(5)}_{\bm\Psi,j,\kph},R^{(6)}_{\bm\Psi,j,\kph},R^{(7)}_{\bm\Psi,j,\kph}\big)^\top=
-\hf\left(\bm u^{\rm N}_{j,k}+\bm u^{\rm S}_{j,k+1}\right)[b_2]_{j,\kph},\\
&R^{(8)}_{\bm\Psi,j,\kph}=-\frac{1}{6}\Big(2\bm u_{j,k}^{\rm N}\cdot\bm b_{j,k}^{\rm N}+\bm u_{j,k}^{\rm N}\cdot\bm b_{j,k+1}^{\rm S}+
\bm u_{j,k+1}^{\rm S}\cdot\bm b_{j,k}^{\rm N}+2\bm u_{j,k+1}^{\rm S}\cdot\bm b_{j,k+1}^{\rm S}\Big)[b_2]_{j,\kph},
\end{align*}
where $[b_2]_{j,\kph}:=\p{(b_2)^{\rm S}_{j,k+1}-(b_2)^{\rm N}_{j,k}}$.

\subsubsection{Local Divergence-Free Property}\label{sec224}
We now prove the local divergence-free property of the proposed PCCU scheme.
\begin{thm}\label{th21}
For the PCCU scheme \eref{2.13}--\eref{2.18} with the reconstruction described in \S\ref{sec221}, the local divergence-free condition
\begin{equation}
\big((b_1)_x\big)_{j,k}+\big((b_2)_y\big)_{j,k}=0,
\label{2.28f}
\end{equation}
holds for all $j,k$ and at all times, provided it is satisfied initially.
\end{thm}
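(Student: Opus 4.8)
The plan is to show that the quantity $\bar A_{j,k} + \bar B_{j,k}$ satisfies the \emph{same} semi-discrete evolution equation that one obtains by adding the ninth and tenth components of the PCCU scheme \eref{2.13}, and that the right-hand side of this combined equation vanishes identically. Since by \eref{2.20f} we have $((b_1)_x)_{j,k} = \sigma_{j,k}\bar A_{j,k}$ and $((b_1)_y)_{j,k} = \sigma_{j,k}\bar B_{j,k}$ (the latter playing the role of $((b_2)_y)_{j,k}$ in the reconstruction), proving \eref{2.16f}, i.e. $\bar A_{j,k} + \bar B_{j,k} = 0$ for all $j,k$ and all $t$, immediately yields \eref{2.28f}. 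So the theorem reduces to establishing that $\frac{\rm d}{{\rm d}t}\big(\bar A_{j,k} + \bar B_{j,k}\big) = 0$ whenever $\bar A_{j,k} + \bar B_{j,k} = 0$ holds at that instant, and then invoking the standard ODE uniqueness argument together with the SSP Runge--Kutta time integrator (which is a convex combination of forward-Euler steps and hence preserves any property preserved by forward Euler).

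First I would write out explicitly the ninth and tenth scheme components. From \eref{2.10}, rows nine and ten of $Q(\bm U)$ and $R(\bm U)$ are identically zero, so $Q^{(9)}_{j,k} = Q^{(10)}_{j,k} = R^{(9)}_{j,k} = R^{(10)}_{j,k} = 0$ and likewise all the path-integral nonconservative terms $Q^{(9,10)}_{\bm\Psi,\jph,k}$, $R^{(9,10)}_{\bm\Psi,j,\kph}$ vanish (as already recorded in \S\ref{sec223}). Hence the evolution of $\bar A_{j,k}$ and $\bar B_{j,k}$ is purely conservative, driven only by the ninth and tenth components of the CU fluxes \eref{2.18}. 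The ninth flux component of $\bm F$ is $uA - b_2 u_y$ and the tenth is $uB + b_2 u_y$; similarly the ninth and tenth components of $\bm G$ are $vA + b_1 v_x$ and $vB - b_1 v_x$. The key observation is that, componentwise, $F^{(9)} + F^{(10)} = u(A+B)$ and $G^{(9)} + G^{(10)} = v(A+B)$: the $b_2 u_y$ and $b_1 v_x$ terms cancel in the sum. This is precisely the algebraic structure built into \eref{2.8}.

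Next I would add the ninth and tenth components of \eref{2.13} and use linearity of the CU flux \eref{2.18} in its flux arguments and in the $\bm U$-difference term. Writing $\mathcal A_{j,k} := A_{j,k} + B_{j,k}$ for any point value, the sum of the two flux components at interface $\jph,k$ collapses to
\begin{equation*}
\frac{s^+_{\jph,k}\,\big(u^{\rm E}_{j,k}\mathcal A^{\rm E}_{j,k}\big) - s^-_{\jph,k}\,\big(u^{\rm W}_{j+1,k}\mathcal A^{\rm W}_{j+1,k}\big)}{s^+_{\jph,k}-s^-_{\jph,k}} + \frac{s^+_{\jph,k}s^-_{\jph,k}}{s^+_{\jph,k}-s^-_{\jph,k}}\big(\mathcal A^{\rm W}_{j+1,k} - \mathcal A^{\rm E}_{j,k}\big),
\end{equation*}
and analogously in $y$. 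Now I invoke the reconstruction: by \eref{2.20f}, in cell $C_{j,k}$ the reconstructed slopes of $A$ and $B$ (its ninth and tenth components, taken via the generalized minmod on $\bar A$ and $\bar B$) combine so that $\mathcal A^{\rm E}_{j,k} = \bar A_{j,k} + \bar B_{j,k} + \frac{\dx}{2}\big((A_x)_{j,k}+(B_x)_{j,k}\big)$, and—since the minmod limiter applied separately to $\bar A$ and to $\bar B = -\bar A$ (under the inductive hypothesis) produces opposite slopes—$(A_x)_{j,k} + (B_x)_{j,k} = 0$, and likewise $(A_y)_{j,k}+(B_y)_{j,k}=0$. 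Therefore every reconstructed point value satisfies $\mathcal A^{\ell}_{j,k} = \bar A_{j,k} + \bar B_{j,k} = 0$. Substituting $\mathcal A^{\rm E}_{j,k} = \mathcal A^{\rm W}_{j+1,k} = \mathcal A^{\rm N}_{j,k} = \mathcal A^{\rm S}_{j,k+1} = 0$ into the collapsed fluxes makes each numerical flux vanish, so $\frac{\rm d}{{\rm d}t}(\bar A_{j,k}+\bar B_{j,k}) = 0$. By induction in time (each forward-Euler stage, and hence each SSP-RK stage, preserves $\bar A_{j,k}+\bar B_{j,k}=0$) the condition holds for all $t$, and \eref{2.28f} follows from \eref{2.20f}.

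The main obstacle is the bookkeeping in the reconstruction step: one must check carefully that the slope limiter used for the ninth and tenth components is applied to $\bar A$ and $\bar B$ in a way that is \emph{antisymmetric} under $\bar B \mapsto -\bar A$, so that the reconstructed slopes of $A$ and $B$ are exact negatives of one another and the combined slope vanishes. The generalized minmod \eref{2.17f} satisfies $\mathrm{minmod}(-a_1,-a_2,\dots) = -\mathrm{minmod}(a_1,a_2,\dots)$, so under the inductive hypothesis $\bar B_{m,n} = -\bar A_{m,n}$ for all neighbors this antisymmetry holds; but one should also verify that the auxiliary slopes and scaling factors $\sigma_{j,k}$ in \eref{2.20f}–\eref{2.23f} do not break it—and indeed they need not, since $\sigma_{j,k}$ multiplies $\bar A_{j,k}$ and $\bar B_{j,k}$ by the \emph{same} factor, preserving $\sigma_{j,k}\bar A_{j,k} + \sigma_{j,k}\bar B_{j,k} = \sigma_{j,k}(\bar A_{j,k}+\bar B_{j,k}) = 0$. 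Once this symmetry bookkeeping is in place, the rest is the short cancellation argument sketched above.
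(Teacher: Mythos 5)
Your proposal is correct and follows essentially the same route as the paper: reduce \eref{2.28f} to $\xbar A_{j,k}+\xbar B_{j,k}=0$ via the common scaling factor $\sigma_{j,k}$ in \eref{2.20f}, note that the ninth and tenth nonconservative terms vanish so the sum evolves purely through the CU fluxes, exploit the cancellation $F^{(9)}+F^{(10)}=u(A+B)$, $G^{(9)}+G^{(10)}=v(A+B)$, and use the oddness of the minmod limiter \eref{2.16}--\eref{2.17f} to get $(A_x)_{j,k}+(B_x)_{j,k}=0$ (and its $y$-analogue), which makes the right-hand side vanish. The only cosmetic difference is that you show each reconstructed point value of $A+B$ is zero while the paper shows the combined flux differences collapse to a multiple of $(A_x)+(B_x)$; both rest on the same facts, and your explicit remark about the SSP-RK stages being convex combinations of forward-Euler steps makes the time-discrete inheritance slightly more explicit than in the paper.
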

\begin{proof}
First, we note that according to \eref{2.20f}, 
$$
\big((b_1)_x\big)_{j,k}+\big((b_2)_y\big)_{j,k}=\sigma_{j,k}\left(\,\xbar A_{j,k}+\,\xbar B_{j,k}\right).
$$
Therefore, in order to prove \eref{2.28f}, it is sufficient to show that $\xbar A_{j,k}+\,\xbar B_{j,k}=0$ for all $j,k$ and for all times
assuming that it is satisfied at the initial time $t=0$. 

We then observe that the quantities $\xbar A_{j,k}$ and $\xbar B_{j,k}$ are the ninth and tenth components of $\xbar{\bm U}_{j,k}$ and thus
they are evolved in time by numerically integrating the ninth and tenth components of \eref{2.13}--\eref{2.18}. Adding these components in
\eref{2.13} results in
\begin{equation}
\begin{aligned}
\frac{{\rm d}}{{\rm d}t}\left(\,\xbar A_{j,k}+\,\xbar B_{j,k}\right)=&-\frac{1}{\dx}
\bigg[{{\cal F}}^{(9)}_{\jph,k}-{{\cal F}}^{(9)}_{\jmh,k}+{{\cal F}}^{(10)}_{\jph,k}-{{\cal F}}^{(10)}_{\jmh,k}\bigg]\\
&-\frac{1}{\dy}\bigg[{{\cal G}}^{(9)}_{j,\kph}-{{\cal G}}^{(9)}_{j,\kmh}+{{\cal G}}^{(10)}_{j,\kph}-{{\cal G}}^{(10)}_{j,\kmh}\bigg].
\end{aligned}
\label{2.29}
\end{equation}
In order to complete the proof, it is sufficient to show that the RHS of \eref{2.29} vanishes as long as $\xbar A_{j,k}+\,\xbar B_{j,k}=0$
for all $j,k$. To this end, we use \eref{2.18} to evaluate 
$$
\begin{aligned}
{{\cal F}}^{(9)}_{\jph,k}-{{\cal F}}^{(9)}_{\jmh,k}+{{\cal F}}^{(10)}_{\jph,k}-{{\cal F}}^{(10)}_{\jmh,k}&=
\frac{s_{\jph,k}^+\left[u^{\rm E}_{j,k}\left(A_{j,k}^{\rm E}+B_{j,k}^{\rm E}\right)\right]
-s_{\jph,k}^-\left[u^{\rm W}_{j+1,k}\left(A_{j+1,k}^{\rm W}+B_{j+1,k}^{\rm W}\right)\right]}{s_{\jph,k}^+-s_{\jph,k}^-}\\
&+\frac{s_{\jph,k}^+s_{\jph,k}^-}{s_{\jph,k}^+-s_{\jph,k}^-}\Big[\left(A^{\rm W}_{j+1,k}+B^{\rm W}_{j+1,k}\right)
-\left(A^{\rm E}_{j,k}+B^{\rm E}_{j,k}\right)\Big]\\
\stackrel{\eref{2.15}}{=}\frac{s_{\jph,k}^+}{s_{\jph,k}^+-s_{\jph,k}^-}&
\left[u^{\rm E}_{j,k}\Big(\,\xbar A_{j,k}+\frac{\dx}{2}(A_x)_{j,k}+\,\xbar B_{j,k}+\frac{\dx}{2}(B_x)_{j,k}\Big)\right]\\
-\frac{s_{\jph,k}^-}{s_{\jph,k}^+-s_{\jph,k}^-}&
\left[u^{\rm W}_{j+1,k}\Big(\,\xbar A_{j+1,k}-\frac{\dx}{2}(A_x)_{j+1,k}+\,\xbar B_{j+1,k}-\frac{\dx}{2}(B_x)_{j+1,k}\Big)\right]\\
+\frac{s_{\jph,k}^+s_{\jph,k}^-}{s_{\jph,k}^+-s_{\jph,k}^-}&\left[\,\xbar A_{j+1,k}-\frac{\dx}{2}(A_x)_{j+1,k}+
\,\xbar B_{j+1,k}-\frac{\dx}{2}(B_x)_{j+1,k}\right]\\
-\frac{s_{\jph,k}^+s_{\jph,k}^-}{s_{\jph,k}^+-s_{\jph,k}^-}&\left[\,\xbar A_{j,k}+\frac{\dx}{2}(A_x)_{j,k}+
\,\xbar B_{j,k}+\frac{\dx}{2}(B_x)_{j,k}\right]\\
&\hspace*{-3.5cm}=\frac{\dx}{2}\Bigg\{\frac{s_{\jph,k}^+\left[u^{\rm E}_{j,k}\left((A_x)_{j,k}+(B_x)_{j,k}\right)\right]
+s_{\jph,k}^-\left[u^{\rm W}_{j+1,k}\left((A_x)_{j+1,k}+(B_x)_{j+1,k}\right)\right]}{s_{\jph,k}^+-s_{\jph,k}^-}\\
&\hspace*{-2.4cm}-\frac{s_{\jph,k}^+s_{\jph,k}^-}{s_{\jph,k}^+-s_{\jph,k}^-}\Big[(A_x)_{j+1,k}+(B_x)_{j+1,k}+(A_x)_{j,k}+(B_x)_{j,k}\Big]
\Bigg\},
\end{aligned}
$$
where the last equality is obtained using $\xbar A_{j,k}+\,\xbar B_{j,k}=\xbar A_{j+1,k}+\,\xbar B_{j+1,k}=0$.

It is now clear that ${{\cal F}}^{(9)}_{\jph,k}-{{\cal F}}^{(9)}_{\jmh,k}+{{\cal F}}^{(10)}_{\jph,k}-{{\cal F}}^{(10)}_{\jmh,k}$ will be
identically zero as long as
\begin{equation}
(A_x)_{j,k}+(B_x)_{j,k}=0
\label{2.30}
\end{equation}
for all $j,k$. Indeed, \eref{2.30} is true since $\,\xbar A_{j,k}=-\,\xbar B_{j,k}$ for all $j,k$ and both the slopes $(A_x)_{j,k}$ and
$(B_x)_{j,k}$ are computed using the same limiter \eref{2.16}.

Similarly, one can show that
${{\cal G}}^{(9)}_{j,\kph}-{{\cal G}}^{(9)}_{j,\kmh}+{{\cal G}}^{(10)}_{j,\kph}-{{\cal G}}^{(10)}_{j,\kmh}\equiv0$, so that the RHS of
\eref{2.29} vanishes and thus the proof of the theorem is complete.
\end{proof}

\subsection{Numerical Examples}\label{sec23}
In this section, we demonstrate the performance of the proposed PCCU scheme in several numerical experiments conducted for the augmented 2-D
ideal MHD system \eref{2.9}-\eref{2.10}, \eref{2.6}. In all of the examples in this section, we take the CFL number 0.25 and the minmod
parameter $\theta=1.3$. 

\paragraph{Example 1---Brio-Wu Shock-Tube Problem.} In the first example, we consider the one-dimensional (1-D) Riemann problem known as the
Brio-Wu shock tube problem, originally presented in \cite{brio1988upwind}. This problem is the standard test for capturing compound waves
that emerge as solutions of the ideal MHD system. We take the following initial data, which depend on $x$ only:
$$ 
\left(\rho,u,v,w,b_1,b_2,b_3,p\right)(x,y,0)=
\left\{\begin{aligned}
&(1,0,0,0,0.75,1,0,1),&&x<0,\\
&(0.125,0,0,0,0.75,-1,0,0.1),&&\mbox{otherwise},
\end{aligned}\right.
$$
and set the free boundary conditions on all sides of the computational domain $[-1,1]\times[-0.01,0.01]$. The specific heat ratio
$\gamma=2$ in this example.

In Figure \ref{fig1}, we plot the $y=0$ cross-section of the density $\rho$, $x$-magnetic field $b_1$, and $y$-magnetic field $b_2$ computed
on $800\times8$ and $1600\times16$ uniform meshes at time $t=0.2$. As one can see, the solution to this 1-D Riemann problem consists of
several nonsmooth structures, such as rarefaction waves, shocks traveling at various speeds, a contact discontinuity, and a compound shock
wave. The proposed PCCU scheme captures all of these complicated structures well, and the obtained results strongly agree with those
reported in \cite{dumbser2019divergence,fuchs2011approximate,li2005locally,li2011central,liu2018entropy}.
\begin{figure}[ht!]
\centerline{\includegraphics[trim=0.4cm 0.8cm 1.0cm 0.2cm, clip, width=5.2cm]{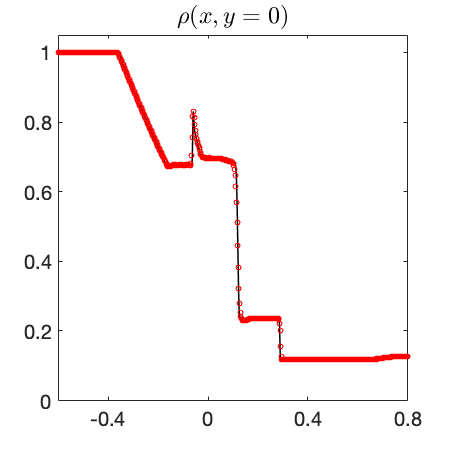}\hspace*{0.5cm}
            \includegraphics[trim=0.4cm 0.8cm 1.0cm 0.2cm, clip, width=5.2cm]{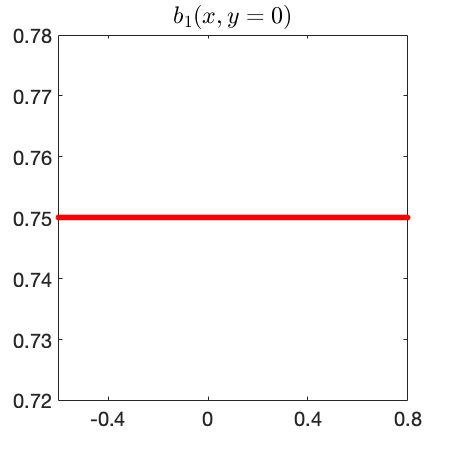}\hspace*{0.5cm}
            \includegraphics[trim=0.4cm 0.8cm 1.0cm 0.2cm, clip, width=5.2cm]{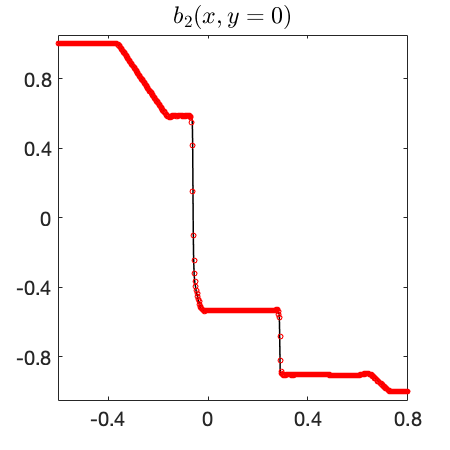}}
\caption{\sf Example 1: $\rho$, $b_1$, and $b_2$ computed by the PCCU scheme on $800\times8$ (red circles) and $1600\times16$ (solid black
line) uniform meshes.\label{fig1}}
\end{figure}

\paragraph{Example 2---Orszag-Tang Vortex Problem.} In the second example, we consider the Orszag-Tang vortex problem, which was introduced
in \cite{Orszag1979small} and has been widely used as a benchmark due to the formation and interaction of multiple shocks as the system
evolves in time and the presence of many important features of MHD turbulence; see, e.g.,
\cite{dumbser2019divergence,li2011central,li2012arbitrary,Liu2021New,Yakovlev2013locally}. The initial conditions for this problem read as
$$
\begin{aligned}
&\rho(x,y,0)\equiv\gamma^2,\quad u(x,y,0)=-\sin y,\quad v(x,y,0)=\sin x,\quad w(x,y,0)\equiv0,\\
&b_1(x,y,0)=-\sin y,\quad b_2(x,y,0)=\sin(2x),\quad b_3(x,y,0)\equiv0,\quad p(x,y,0)\equiv\gamma,
\end{aligned}
$$
where $\gamma=5/3$ is the specific heat ratio. We set the periodic boundary conditions on all sides of the computational domain
$[0,2\pi]\times[0,2\pi]$. 

The time evolution of the fluid density $\rho$ computed on a uniform $200\times200$ mesh is shown at times $t=0.5$, 2, 3, and 4 in Figure 
\ref{fig2}. We observe that the numerical solution computed by the proposed PCCU scheme remains stable and is consistent with previous 
results presented in \cite{li2011central,li2012arbitrary,Liu2021New,Yakovlev2013locally}, demonstrating that the ability of our scheme to
capture both smooth flows and shocks.
\begin{figure}[ht!]
\centerline{\includegraphics[trim=1.4cm 1.1cm 1.4cm 0.5cm, clip, width=5.3cm]{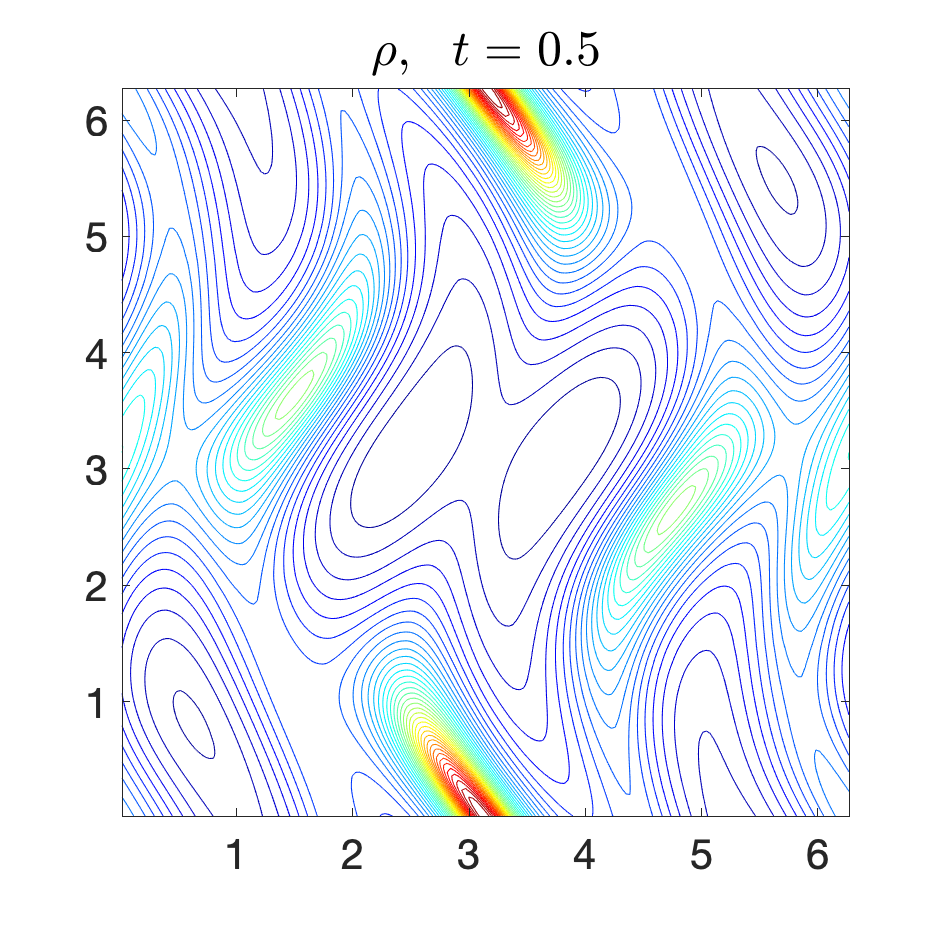}\hspace*{0.7cm}
            \includegraphics[trim=1.4cm 1.1cm 1.4cm 0.5cm, clip, width=5.3cm]{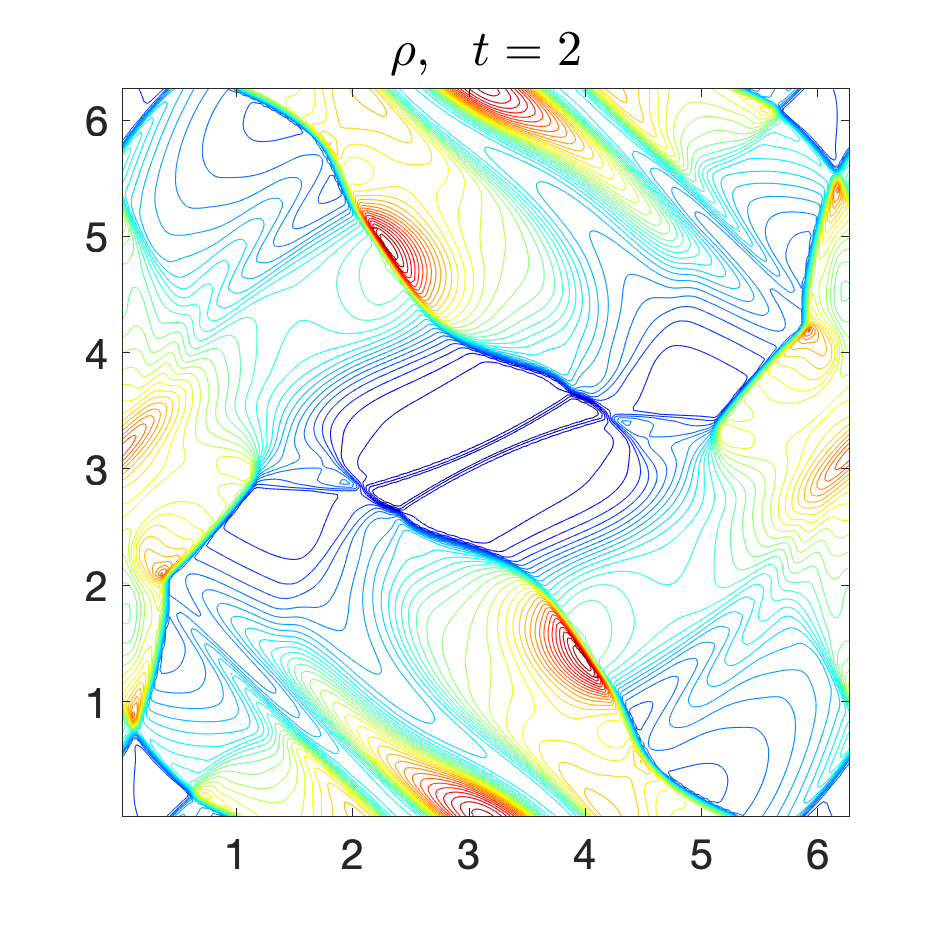}}
\vskip8pt
\centerline{\includegraphics[trim=1.4cm 1.1cm 1.4cm 0.5cm, clip, width=5.3cm]{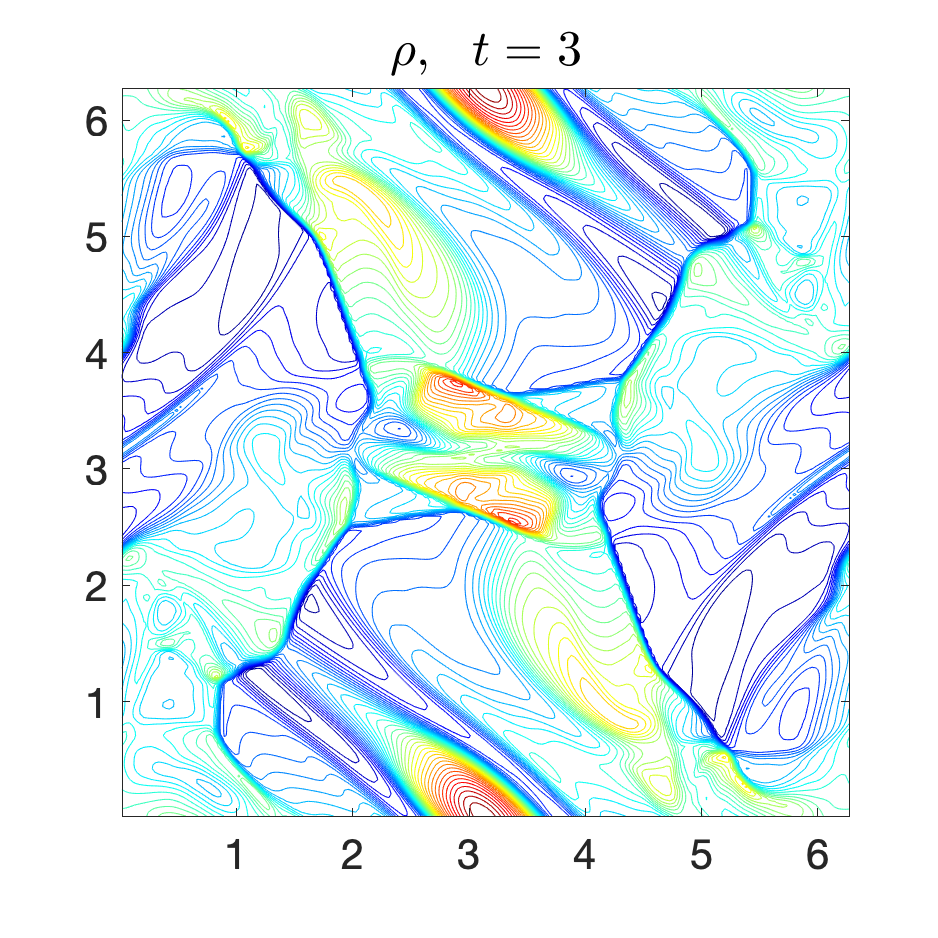}\hspace*{0.7cm}
            \includegraphics[trim=1.4cm 1.1cm 1.4cm 0.5cm, clip, width=5.3cm]{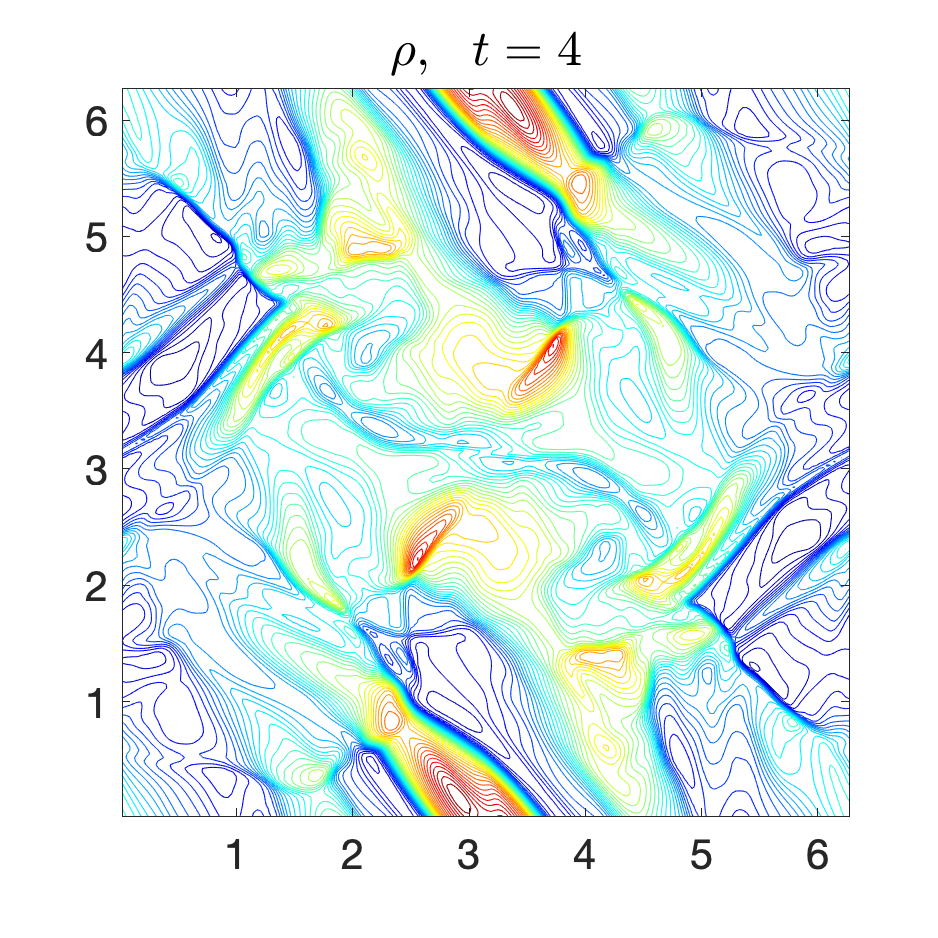}}
\caption{\sf Example 2: Fluid density $\rho$ computed by the proposed PCCU scheme at different times. 40 equally spaced contours are used in
each plot with the ranges $[2.11,5.83]$, $[0.63,6.17]$, $[1.29,6.12]$, and $[1.25,5.8]$, respectively.\label{fig2}}
\end{figure}

\paragraph{Example 3---Rotor Problem.} Next, we consider the ``second rotor problem'' from \cite{balsara1999staggered,toth2000constraint},
referred to as the rotor problem in this paper. This commonly used benchmark problem describes a rapidly-rotating disk of dense fluid
centered in a background of static fluid. Over time, the disk expands and rotates. The initial conditions are given by 
$$
\begin{aligned}
&(\rho,u,v)(x,y,0)=\left\{\begin{aligned}
&\left(10,\frac{0.5-y}{r_0},\frac{x-0.5}{r_0}\right),&&r<0.1,\\
&\left(1+9\lambda,\frac{\lambda(0.5-y)}{r},\frac{\lambda(x-0.5)}{r}\right),&&0.1\le r\le0.115,\\
&(1,0,0),&&r>0.115,
\end{aligned}\right.\\
&w(x,y,0)=b_2(x,y,0)=b_3(x,y,0)\equiv0,\quad b_1(x,y,0)\equiv\frac{2.5}{\sqrt{4\pi}},\quad p(x,y,0)\equiv0.5,
\end{aligned}
$$
where $r=\sqrt{(x-0.5)^2+(y-0.5)^2}$ and $\lambda=(0.115-r)/0.015$. We take the specific heat ratio $\gamma=5/3$ and use the periodic
boundary conditions on all sides of the computational domain $[0,1]\times[0,1]$.

In Figure \ref{fig3}, we show the fluid density $\rho$, pressure $p$, Mach number $\abs{\bm u}/c_s$ (where $c_s=\sqrt{\gamma p/\rho}$ is the
speed of sound), and magnetic pressure $\abs{\bm b}^2/2$ computed on a uniform $200\times200$ mesh at time $t=0.295$. We note that our
results are in good agreement with those reported in, e.g., \cite{toth2000constraint,Liu2021New,li2011central}. In addition, it is
emphasized in \cite{balsara1999staggered,Liu2021New,toth2000constraint} that, due to rapid changes at the center of the rotation, many
numerical methods produce oscillations or negative pressure values. We stress that during numerical simulations, we have not observed any 
oscillations, and the proposed PCCU scheme has produced no negative values of the computed pressure. The oscillation-free feature is further
illustrated in Figure \ref{fig3b}, where we zoom in on the center of the Mach number plots produced on three consecutively refined grids. 
\begin{figure}[ht!]
\centerline{\includegraphics[trim=0.4cm 1.1cm 1.4cm 0.4cm, clip, width=5.8cm]{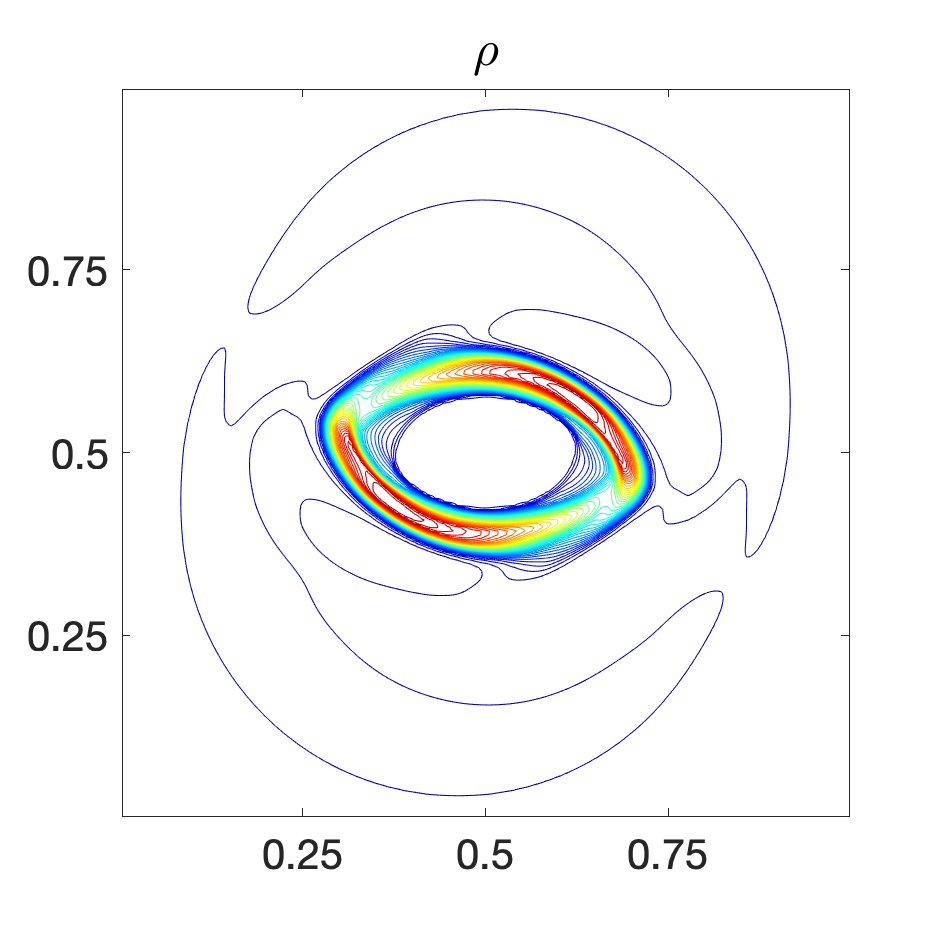}\hspace*{0.5cm}
            \includegraphics[trim=0.4cm 1.1cm 1.4cm 0.4cm, clip, width=5.8cm]{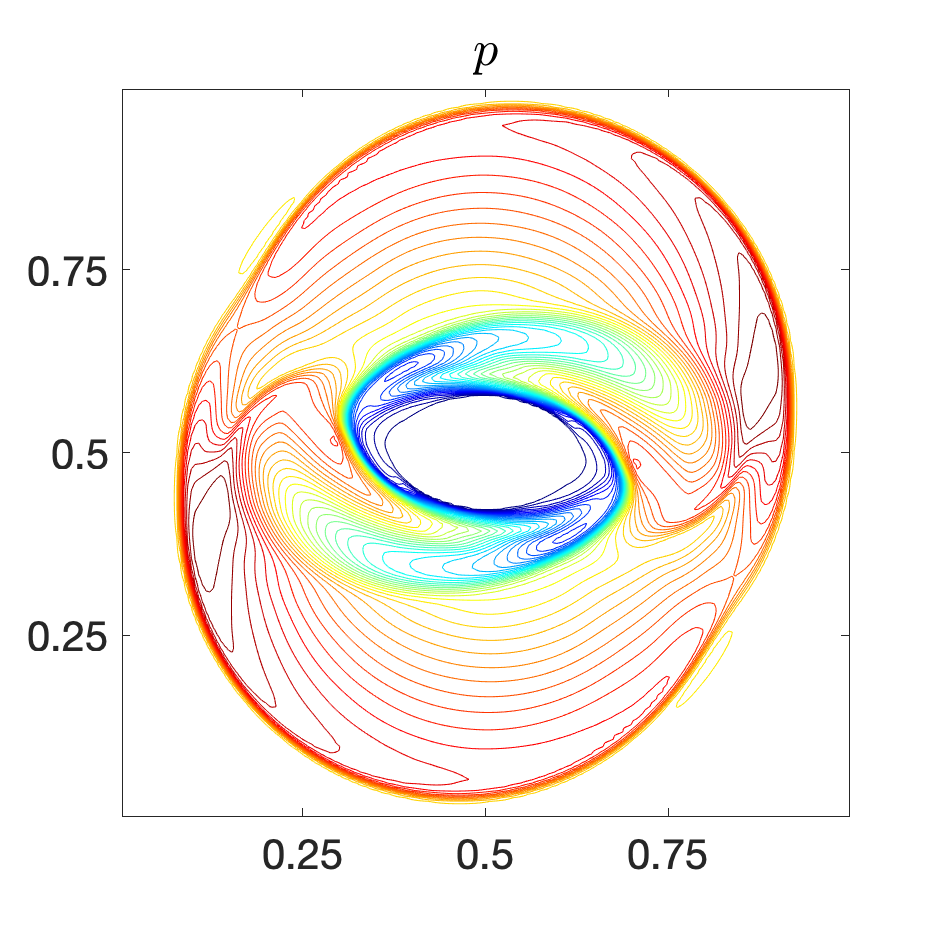}}
\vskip8pt
\centerline{\includegraphics[trim=0.4cm 1.1cm 1.4cm 0.4cm, clip, width=5.8cm]{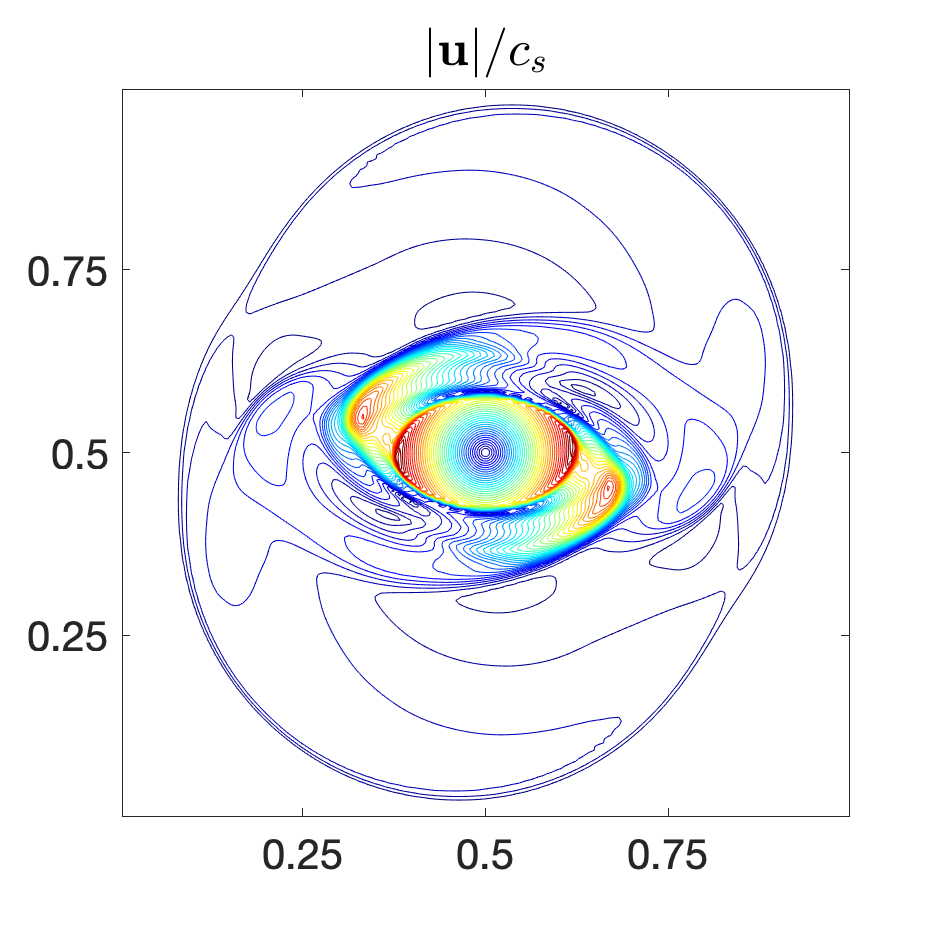}\hspace*{0.5cm}
            \includegraphics[trim=0.4cm 1.1cm 1.4cm 0.4cm, clip, width=5.8cm]{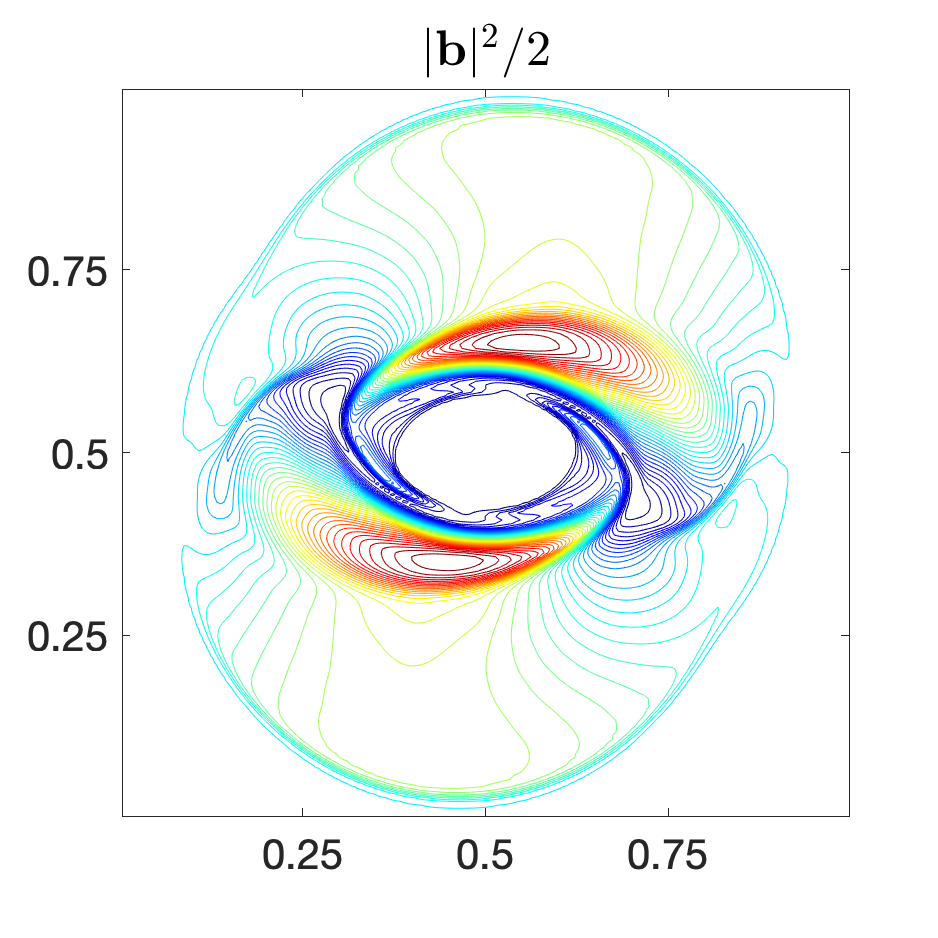}}
\caption{\sf Example 3: Fluid density $\rho$, pressure $p$, Mach number $\abs{\bm u}/c_s$, and magnetic pressure $\abs{\bm b}^2/2$ computed 
by the proposed PCCU scheme. 40 equally spaced contours are used in each plot with the ranges $[0.71,8.95]$, $[0.01,0.78]$, $[0,2.9]$, and
$[0.02,0.65]$, respectively.\label{fig3}}
\end{figure}
\begin{figure}[ht!]
\centerline{\includegraphics[trim=0.4cm 1.1cm 1.4cm 0.5cm, clip, width=5.3cm]{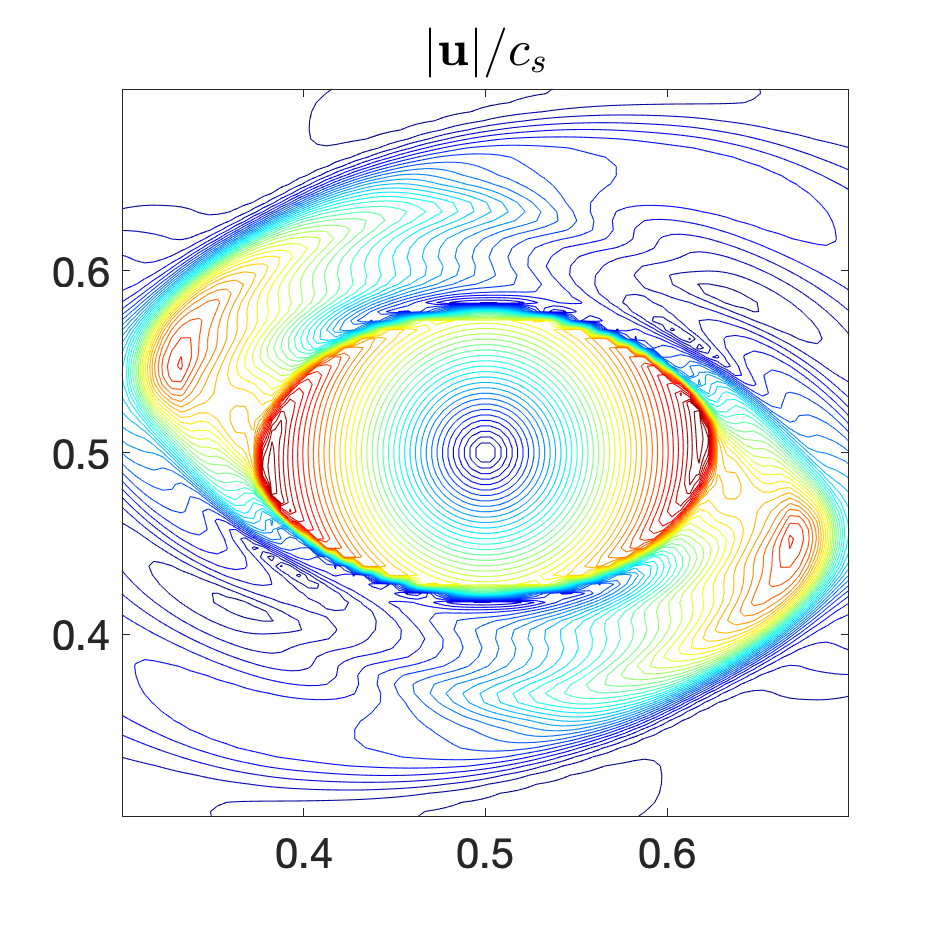}\hspace*{0.3cm}
            \includegraphics[trim=0.4cm 1.1cm 1.4cm 0.5cm, clip, width=5.3cm]{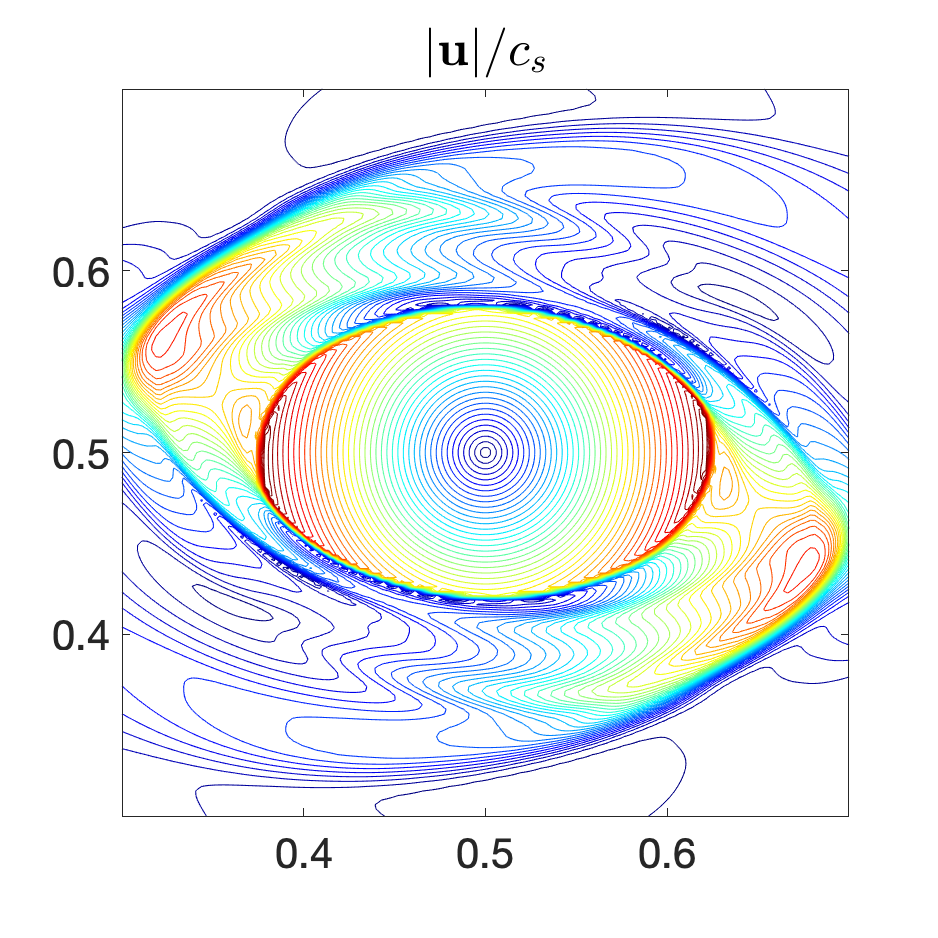}\hspace*{0.3cm}
            \includegraphics[trim=0.4cm 1.1cm 1.4cm 0.5cm, clip, width=5.3cm]{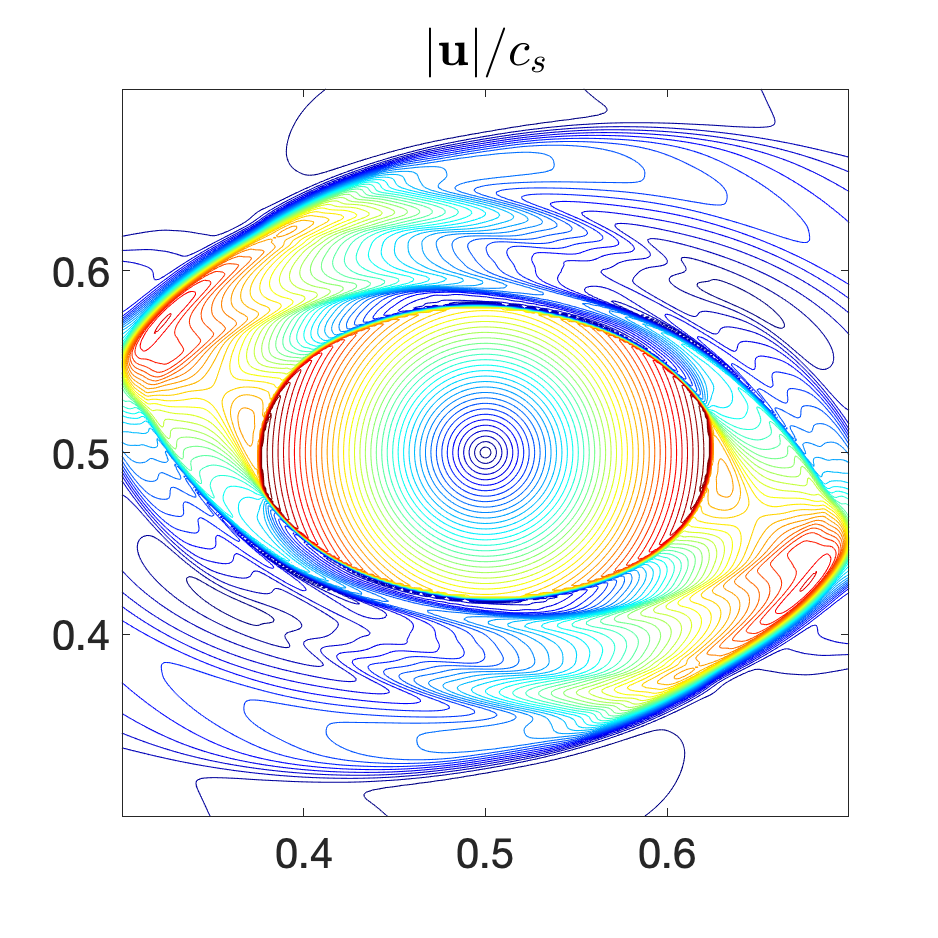}}
\caption{\sf Example 3: Zoom in on the center of the Mach number plots produced on uniform $200\times200$ (left), $400\times400$ (middle),
and $800\times800$ (right) meshes. 40 equally spaced contours are used in each plot.\label{fig3b}}
\end{figure}

\paragraph{Example 4---Blast Problem.} In this example, we consider the blast problem first introduced in \cite{balsara1999staggered}. This
benchmark problem is seen in a number of studies (see, e.g., \cite{li2011central,li2012arbitrary,Liu2021New,Yakovlev2013locally}) and is
considered a challenge due to the low gas pressure and strong magnetosonic shocks. Negative pressures are easily produced near the shocks;
see \cite{li2012arbitrary,li2011central} and references therein. The initial conditions are
$$
(\rho,u,v,w,b_1,b_2,b_3)(x,y,0)=\left(1,0,0,0,\frac{100}{\sqrt{4\pi}},0,0\right),\quad 
p(x,y,0)=\begin{cases}
1000,&\sqrt{x^2+y^2}<0.1,\\
0.1,&\mbox{othrwise}.
\end{cases}
$$
We take the specific heat ratio $\gamma=1.4$ and use zero-order extrapolation on the boundaries of the computational domain 
$[-0.5,0.5]\times[-0.5,0.5]$. 

The fluid density $\rho$, pressure $p$, magnitude of velocity $\abs{\bm u}$, and magnetic pressure $\abs{\bm b}^2/2$ computed by the
proposed PCCU scheme on a $200\times200$ uniform mesh at $t=0.01$ are depicted in Figure \ref{fig5}. Additionally, the numerical
experimentation of the proposed method resulted in positive pressure values throughout the entire computational domain, returning a minimum
pressure of 0.10. Positive pressure values are also completely maintained when running the blast problem on a refined $400\times400$ uniform
grid (the fine mesh results are not shown here for brevity).
\begin{figure}[ht!]
\centerline{\includegraphics[trim=0.2cm 1.1cm 1.4cm 0.4cm, clip, width=5.8cm]{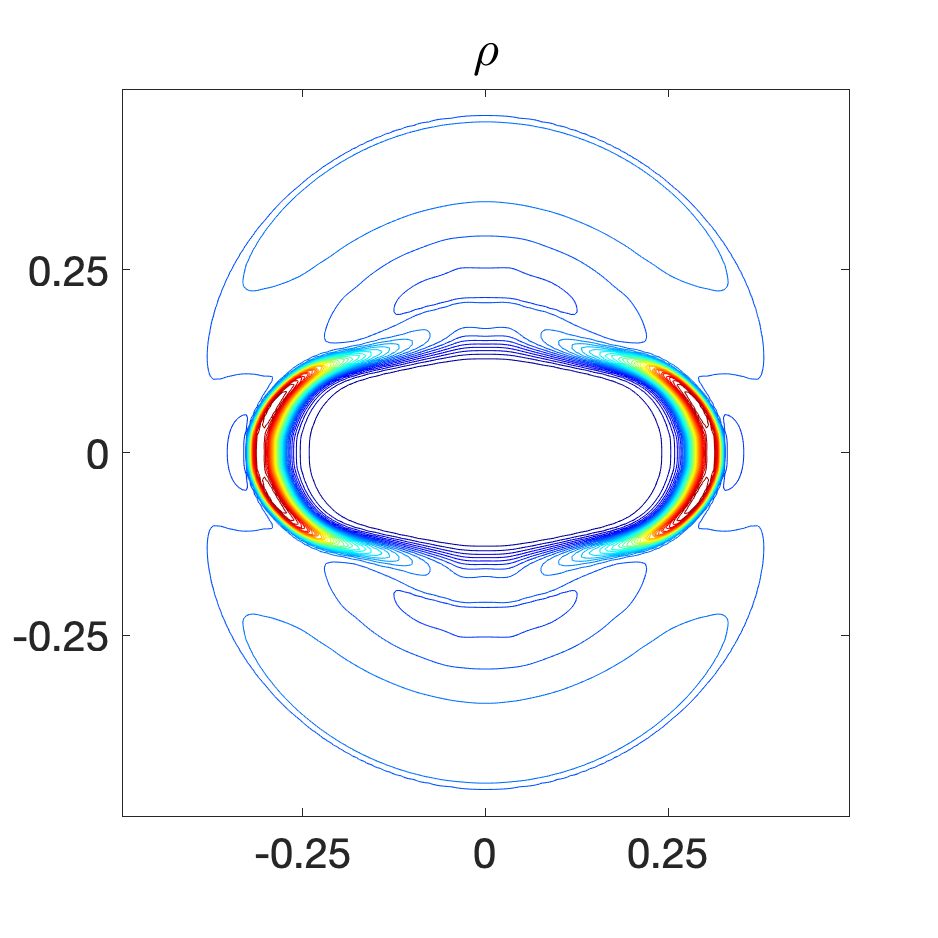}\hspace*{0.5cm}
            \includegraphics[trim=0.2cm 1.1cm 1.4cm 0.4cm, clip, width=5.8cm]{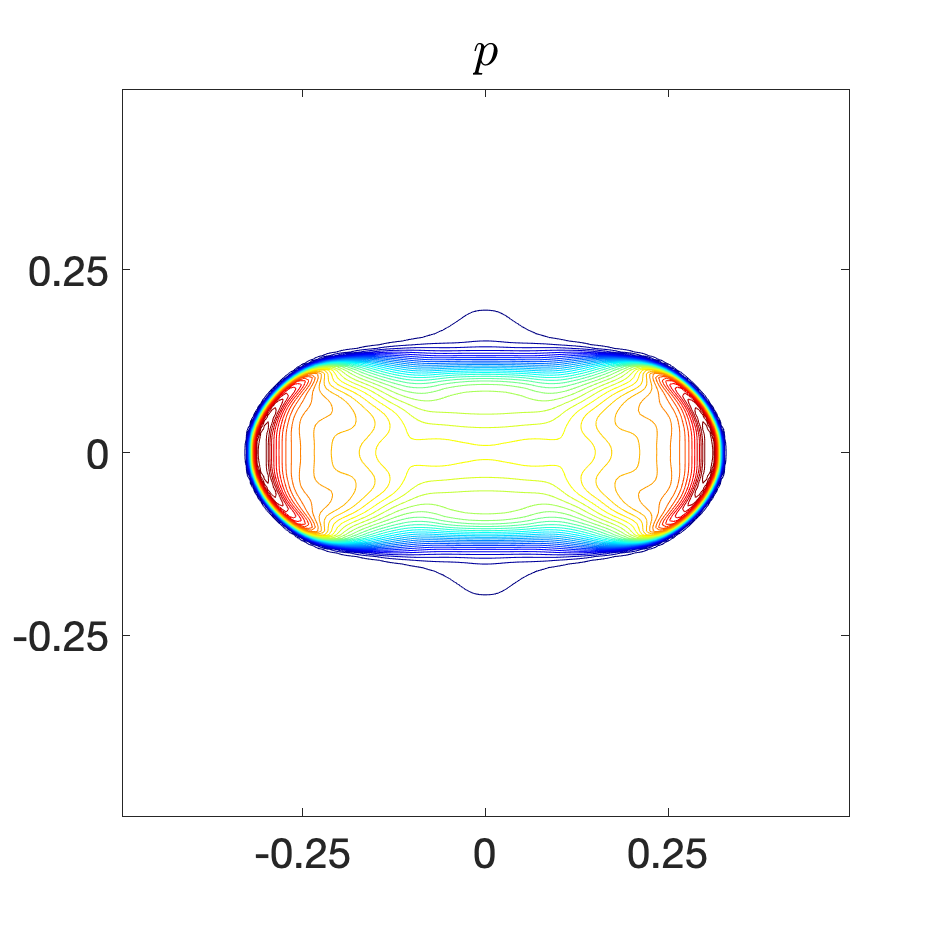}}
\vskip8pt
\centerline{\includegraphics[trim=0.2cm 1.1cm 1.4cm 0.4cm, clip, width=5.8cm]{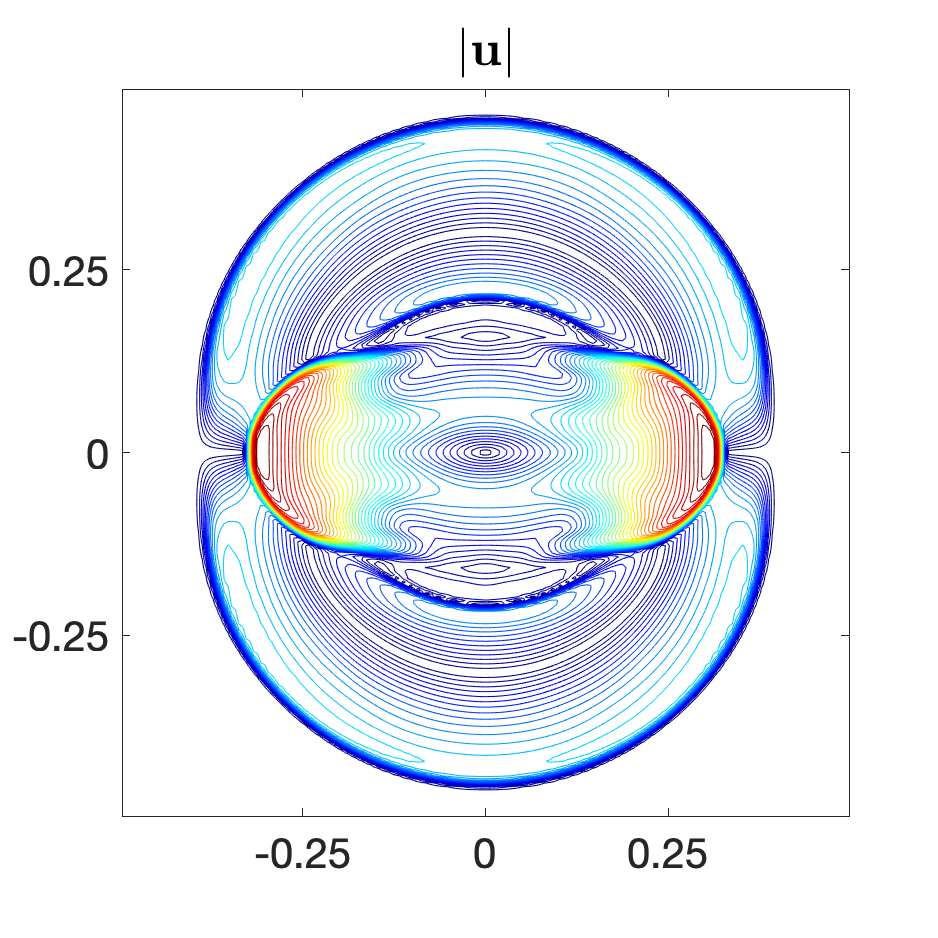}\hspace*{0.5cm}
            \includegraphics[trim=0.2cm 1.1cm 1.4cm 0.4cm, clip, width=5.8cm]{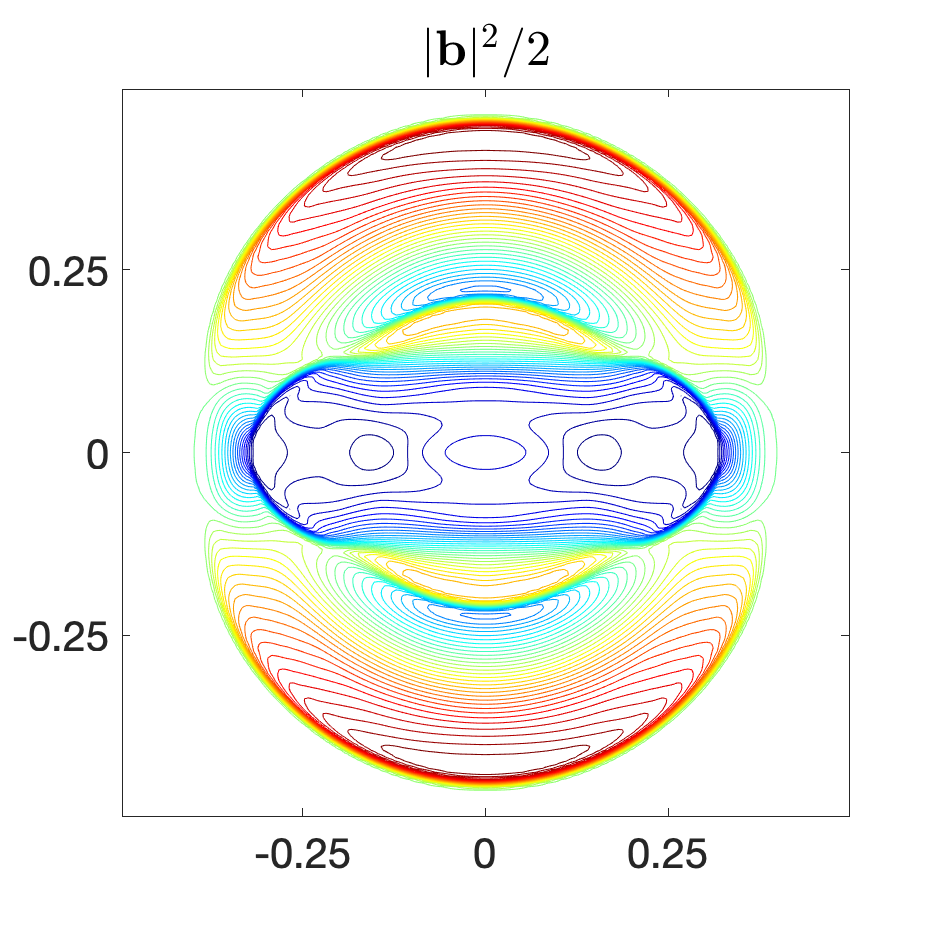}}
\caption{\sf Example 4: Fluid density $\rho$, pressure $p$, velocity magnitude $\abs{\bm u}$, and magnetic pressure $\abs{\bm b}^2/2$
computed by the proposed PCCU scheme. 40 equally spaced contours are used in each plot with the ranges $[0.22,4.09]$, $[0.10,250]$,
$[0,16.77]$, and $[215,588]$, respectively.\label{fig5}}
\end{figure}

\section{Shallow Water MHD}\label{sec3}
\subsection{Governing Equations}\label{sec31}
In this section, we study the 2-D modified Godunov-Powell shallow water MHD system, which reads as
\begin{equation}
\begin{aligned}
&h_t+(hu)_x+(hv)_y=0,\\
&(hu)_t+\left(hu^2+\frac{g}{2}h^2-ha^2\right)_x+(huv-hab)_y=-a\left[(ha)_x+(hb)_y\right],\\
&(hv)_t+(huv-hab)_x+\left(hv^2+\frac{g}{2}h^2-hb^2\right)_y=-b\left[(ha)_x+(hb)_y\right],\\
&(ha)_t+(hbu-hav)_y=-u\left[(ha)_x+(hb)_y\right],\\
&(hb)_t+(hav-hbu)_x=-v\left[(ha)_x+(hb)_y\right].
\end{aligned}
\label{3.1}
\end{equation}
Here, $h$ is the fluid thickness, $u$ and $v$ represent the $x$- and $y$-velocity, $(a,b)^\top$ is the reduced magnetic field, which has
units of velocity, and $g$ is the acceleration due to gravity. As in the ideal MHD system considered in \S\ref{sec2}, one can easily show
that
\begin{equation}
(ha)_x+(hb)_y=0
\label{3.2}
\end{equation}
as long as the field $(ha,hb)^\top$ is initially divergence-free. Therefore, the Godunov-Powell source terms on the RHS of \eref{3.1} are
theoretically zero. Still, they are added to the original shallow water MHD system (whose RHS is identically zero in the case of a flat bottom
topography) to help enforce the divergence-free constraint \eref{3.2} numerically; see, e.g.,
\cite{fuchs2011approximate,janhunen2000positive,waagan2011robust}.

In order to develop a locally divergence-free numerical method for the system \eref{3.1}, this divergence constraint \eref{3.2} must be
enforced on the discrete level. As in \S\ref{sec2}, we achieve this goal by introducing the new variables $A:=(ha)_x$ and $B:= (hb)_y$,
differentiating the induction equation in \eref{3.1} with respect to $x$ and $y$, and obtaining the following equations for $A$ and $B$,
\begin{equation}
\begin{aligned}
&A_t+\big(uA-hbu_y\big)_x+\big(vA+hav_x\big)_y=0,\\
&B_t+\big(uB+hbu_y\big)_x+\big(vB-hav_x\big)_y=0.
\end{aligned}
\label{3.3}
\end{equation}
which are then added to the studied system \eref{3.1}.

Prior to introducing the numerical method for the augmented system \eref{3.1}, \eref{3.3}, we write it in the vector form
\begin{equation}
\bm U_t+\bm F(\bm U)_x+\bm G(\bm U)_y= Q(\bm U)\bm U_x+R(\bm U)\bm U_y,
\label{3.8}
\end{equation}
where $\bm U:=(h,hu,hv,ha,hb,A,B)^\top$,
\allowdisplaybreaks
\begin{align}
&\bm F(\bm U)=
\begin{pmatrix}hu\\hu^2+\frac{g}{2}gh^2-ha^2\\huv-hab\\0\\hbu-hav\\uA-hbu_y\\uB+hbu_y\end{pmatrix},\quad
Q(\bm U)=\begin{pmatrix}0&0&0&0&0&0&0\\
0&0&0&-a&0&0&0\\
0&0&0&-b&0&0&0\\
0&0&0&-u&0&0&0\\
0&0&0&-v&0&0&0\\
0&0&0&0&0&0&0\\
0&0&0&0&0&0&0
\end{pmatrix},\nonumber\\
&\bm G(\bm U)=
\begin{pmatrix}hv\\huv-hab\\hv^2+\frac{g}{2}h^2-hb^2\\hav-hbu\\0\\vA+hav_x\\vB-hav_x,\end{pmatrix},\quad
R(\bm U)=\begin{pmatrix}0&0&0&0&0&0&0\\
0&0&0&0&-a&0&0\\
0&0&0&0&-b&0&0\\
0&0&0&0&-u&0&0\\
0&0&0&0&-v&0&0\\
0&0&0&0&0&0&0\\
0&0&0&0&0&0&0
\end{pmatrix}.\nonumber
\end{align}

\subsection{Numerical Method}\label{sec32}
We now extend the PCCU scheme developed in \S\ref{sec22} to the shallow water MHD system. 

Following the notation from section \ref{sec22}, the semi-discrete PCCU scheme still reads as \eref{2.13}--\eref{2.18}, and the resulting
system of ODEs is to be numerically integrated using an appropriate ODE solver, for instance, the three-stage third-order SSP Runge-Kutta,
which we have used in the numerical experiments reported in \S\ref{sec33}.

In \S\ref{sec321}, \S\ref{sec322}, and \S\ref{sec323} below, we focus on details of the scheme, which are different from the ideal MHD case.

\subsubsection{Piecewise Linear Reconstruction}\label{sec321}
A piecewise linear reconstruction is performed for the discrete variables
$$
\bm W_{j,k}:=(\,\xbar h_{j,k},u_{j,k},v_{j,k},(\xbar{ha})_{j,k},(\xbar{hb})_{j,k},\xbar A_{j,k},\xbar B_{j,k})^\top,
$$
where $u_{j,k}:=\nicefrac{(\,\xbar{hu})_{j,k}}{\,\xbar h_{j,k}}$ and $v_{j,k}:=\nicefrac{(\,\xbar{hv})_{j,k}}{\,\xbar h_{j,k}}$. We then
calculate cell interface values $\bm W^{\rm E,\rm W,\rm N,\rm S}_{j,k}$ using \eref{2.15}. The slopes $(W^{(i)}_x)_{j,k}$ for $i\ne4$ and
$(W^{(i)}_y)_{j,k}$ for $i\ne5$ are computed using the generalized minmod limiter as in \eref{2.16}--\eref{2.17f}.

Like in the ideal MHD case, the slopes $((ha)_x)_{j,k}$ and $((hb)_y)_{j,k}$ are computed in a way that allows one to enforce the local
discrete divergence-free condition $((ha)_x)_{j,k}+((hb)_y)_{j,k}\equiv0$ for all $j,k$. To this end, we proceed as in \S\ref{sec221} and
set
\begin{equation*}
((ha)_x)_{j,k}=\sigma_{j,k}\,\xbar A_{j,k},\quad((hb)_y)_{j,k}=\sigma_{j,k}\,\xbar B_{j,k},
\end{equation*}
where 
$$
\sigma_{j,k}=\min\Big\{1,\sigma^x_{j,k},\sigma^y_{j,k}\Big\},
$$
$$
\begin{aligned}
&\sigma^x_{j,k}:=\left\{\begin{aligned}
&\min\bigg\{1,\frac{((\widehat{ha})_x)_{j,k}}{\xbar A_{j,k}}\bigg\}&&\mbox{if}~((\widehat{ha})_x)_{j,k}\,\xbar A_{j,k}>0,\\
&0&&\mbox{otherwise},
\end{aligned}\right.\\
&\sigma^y_{j,k}:=\left\{\begin{aligned}
&\min\bigg\{1,\frac{((\widehat{hb})_y)_{j,k}}{\xbar B_{j,k}}\bigg\}&&\mbox{if}~((\widehat{hb})_y)_{j,k}\,\xbar B_{j,k}>0,\\
&0&&\mbox{otherwise},
\end{aligned}\right.
\end{aligned}
$$
and $((\widehat{ha})_x)_{j,k}$ and $((\widehat{hb})_y)_{j,k}$ are computed using the generalized minmod reconstruction as in
\eref{2.16}--\eref{2.17f}.

\subsubsection{One-Sided Speeds of Propagation}\label{sec322}
We point out that in the shallow water MHD case, computing the one-sided speeds $s^\pm_{\jph,k}$ and $s^\pm_{j,\kph}$ needed in the
semi-discretization \eref{2.13}--\eref{2.18}, is significantly easier than in the ideal MHD case. We follow the general recipe and
estimate the $x$- and $y$-directional speeds using the largest and smallest eigenvalues of the matrices
$\frac{\partial\bm F}{\partial\bm U}(\bm U)-Q(\bm U)$ and $\frac{\partial\bm G}{\partial\bm U}(\bm U)-R(\bm U)$, respectively. This results
in
$$
\begin{aligned}
&s_{\jph,k}^+=\max\left\{u_{j,k}^{\rm E}+\sqrt{\big(a_{j,k}^{\rm E}\big)^2+gh_{j,k}^{\rm E}},\,
u_{j+1,k}^{\rm W}+\sqrt{\big(a_{j+1,k}^{\rm W}\big)^2+gh_{j+1,k}^{\rm W}},\,0\right\},\\
&s_{\jph,k}^-=\min\left\{u_{j,k}^{\rm E}-\sqrt{\big(a_{j,k}^{\rm E}\big)^2+gh_{j,k}^{\rm E}},\,
u_{j+1,k}^{\rm W}-\sqrt{\big(a_{j+1,k}^{\rm W}\big)^2+gh_{j+1,k}^{\rm W}},\,0\right\},\\
&s_{j,\kph}^+=\max\left\{v_{j,k}^{\rm N}+\sqrt{\big(b_{j,k}^{\rm N}\big)^2+gh_{j,k}^{\rm N}},\,
v_{j,k+1}^{\rm S}+\sqrt{\big(b_{j,k+1}^{\rm S}\big)^2+gh_{j,k+1}^{\rm S}},\,0\right\},\\
&s_{j,\kph}^-=\min\left\{v_{j,k}^{\rm N}-\sqrt{\big(b_{j,k}^{\rm N}\big)^2+gh_{j,k}^{\rm N}},\,
v_{j,k+1}^{\rm S}-\sqrt{\big(b_{j,k+1}^{\rm S}\big)^2+gh_{j,k+1}^{\rm S}},\,0\right\}.
\end{aligned}
$$

\subsubsection{Discretization of Nonconservative Products}\label{sec323}
In order to evaluate the contribution of the nonconservative terms $Q(\bm U)\bm U_x$ appearing on the RHS of \eref{3.8}, we again follow the
lines of \cite{CKN22,CKM} and evaluate the corresponding integrals exactly:
\allowdisplaybreaks
\begin{align*}
&Q^{(1)}_{j,k}=Q^{(6)}_{j,k}=Q^{(7)}_{j,k}=Q^{(1)}_{\bm\Psi,\jph,k}=Q^{(6)}_{\bm\Psi,\jph,k}=Q^{(7)}_{\bm\Psi,\jph,k}=0,\\
&Q^{(2)}_{j,k}=-\int\limits_{x_\jmh}^{x_\jph}\frac{\widetilde{ha}(x,y_k)}{\widetilde h(x,y_k)}((ha)_x)_{j,k}\,{\rm d}x\\
&\hspace*{0.7cm}=\left\{\begin{aligned}
&-a_{j,k}\sigma_{j,k}\,\xbar A_{j,k}\dx&&\mbox{if}~(h_x)_{j,k}=0,\\
&-\sigma_{j,k}\,\xbar A_{j,k}\left(\frac{(\xbar{ha})_{j,k}(h_x)_{j,k}-\,\xbar h_{j,k}\sigma_{j,k}\,\xbar A_{j,k}}{((h_x)_{j,k})^2}\,
\ln\bigg(\frac{h^{\rm E}_{j,k}}{h^{\rm W}_{j,k}}\bigg)+\frac{\sigma_{j,k}\,\xbar A_{j,k}\dx}{(h_x)_{j,k}}\right)&&\mbox{otherwise},
\end{aligned}\right.\\
&Q^{(3)}_{j,k}=-\int\limits_{x_\jmh}^{x_\jph}\frac{\widetilde{hb}(x,y_k)}{\widetilde h(x,y_k)}((ha)_x)_{j,k}\,{\rm d}x\\
&\hspace*{0.7cm}=\left\{\begin{aligned}
&-b_{j,k}\sigma_{j,k}\,\xbar A_{j,k}\dx&&\mbox{if}~(h_x)_{j,k}=0,\\
&-\sigma_{j,k}\,\xbar A_{j,k}\left(\frac{(\xbar{hb})_{j,k}(h_x)_{j,k}-\,\xbar h_{j,k}((hb)_x)_{j,k}}{((h_x)_{j,k})^2}\,
\ln\bigg(\frac{h^{\rm E}_{j,k}}{h^{\rm W}_{j,k}}\bigg)+\frac{((hb)_x)_{j,k}\dx}{(h_x)_{j,k}}\right)&&\mbox{otherwise},
\end{aligned}\right.\\
&Q^{(4)}_{j,k}=-\int\limits_{x_\jmh}^{x_\jph}\widetilde u(x,y_k)((ha)_x)_{j,k}\,{\rm d}x=-u_{j,k}\sigma_{j,k}\,\xbar A_{j,k}\dx,\\
&Q^{(5)}_{j,k}=-\int\limits_{x_\jmh}^{x_\jph}\widetilde v(x,y_k)((ha)_x)_{j,k}\,{\rm d}x=-v_{j,k}\sigma_{j,k}\,\xbar A_{j,k}\dx,\\
&Q^{(2)}_{\bm\Psi,\jph,k}=-\int\limits_0^1\frac{(ha)^{\rm E}_{j,k}+s\p{(ha)^{\rm W}_{j+1,k}-(ha)^{\rm E}_{j,k}}}
{h^{\rm E}_{j,k}+s\p{h^{\rm W}_{j+1,k}-h^{\rm E}_{j,k}}}[ha]_{\jph,k}\,{\rm d}s\\
&\hspace*{0.7cm}=\left\{\begin{aligned}
&-\hf\left(a^{\rm E}_{j,k}+a^{\rm W}_{j+1,k}\right)[ha]_{\jph,k}&&\mbox{if}~[h]_{\jph,k}=0,\\
&-[ha]_{\jph,k}\left(\frac{(ha)^{\rm E}_{j,k}[h]_{\jph,k}-h^{\rm E}_{j,k}[ha]_{\jph,k}}{[h]_{\jph,k}^2}\,
\ln\bigg(\frac{h^{\rm W}_{j+1,k}}{h^{\rm E}_{j,k}}\bigg)+\frac{[ha]_{\jph,k}}{[h]_{\jph,k}}\right)&&\mbox{otherwise},
\end{aligned}\right.\\
&Q^{(3)}_{\bm\Psi,\jph,k}=-\int\limits_0^1\frac{(hb)^{\rm E}_{j,k}+s\p{(hb)^{\rm W}_{j+1,k}-(hb)^{\rm E}_{j,k}}}
{h^{\rm E}_{j,k}+s\p{h^{\rm W}_{j+1,k}-h^{\rm E}_{j,k}}}[ha]_{\jph,k}\,{\rm d}s\\
&\hspace*{0.7cm}=\left\{\begin{aligned}
&-\hf\left(b^{\rm E}_{j,k}+b^{\rm W}_{j+1,k}\right)[ha]_{\jph,k}&&\mbox{if}~[h]_{\jph,k}=0,\\
&-[ha]_{\jph,k}\left(\frac{(hb)^{\rm E}_{j,k}[h]_{\jph,k}-h^{\rm E}_{j,k}[hb]_{\jph,k}}{[h]_{\jph,k}^2}\,
\ln\bigg(\frac{h^{\rm W}_{j+1,k}}{h^{\rm E}_{j,k}}\bigg)+\frac{[hb]_{\jph,k}}{[h]_{\jph,k}}\right)&&\mbox{otherwise},
\end{aligned}\right.\\
&Q^{(4)}_{\bm\Psi,\jph,k}=-\int\limits_0^1\left\{u^{\rm E}_{j,k}+s\p{u^{\rm W}_{j+1,k}-u^{\rm E}_{j,k}}\right\}[ha]_{\jph,k}\,{\rm d}s=
-\hf\left(u^{\rm E}_{j,k}+u^{\rm W}_{j+1,k}\right)[ha]_{\jph,k},\\
&Q^{(5)}_{\bm\Psi,\jph,k}=-\int\limits_0^1\left\{v^{\rm E}_{j,k}+s\p{v^{\rm W}_{j+1,k}-v^{\rm E}_{j,k}}\right\}[ha]_{\jph,k}\,{\rm d}s=
-\hf\left(v^{\rm E}_{j,k}+v^{\rm W}_{j+1,k}\right)[ha]_{\jph,k},
\end{align*}
where
$$
\begin{aligned}
&a_{j,k}:=\frac{(\,\xbar{ha})_{j,k}}{\,\xbar h_{j,k}},\quad b_{j,k}:=\frac{(\,\xbar{hb})_{j,k}}{\,\xbar h_{j,k}},\quad
a^{\rm E(W)}_{j,k}:=\frac{(ha)^{\rm E(W)}_{j,k}}{h^{\rm E(W)}_{j,k}},\quad
b^{\rm E(W)}_{j,k}:=\frac{(hb)^{\rm E(W)}_{j,k}}{h^{\rm E(W)}_{j,k}},\\
&[h]_{\jph,k}:=h^{\rm W}_{j+1,k}-h^{\rm E}_{j,k},\quad[ha]_{\jph,k}:=(ha)^{\rm W}_{j+1,k}-(ha)^{\rm E}_{j,k},\quad
[hb]_{\jph,k}:=(hb)^{\rm W}_{j+1,k}-(hb)^{\rm E}_{j,k}.
\end{aligned}
$$

The contribution of the nonconservative terms $R(\bm U)\bm U_x$ appearing on the RHS of \eref{3.8} is obtained in a similar manner and given
by
\allowdisplaybreaks
\begin{align*}
&R^{(1)}_{j,k}=R^{(6)}_{j,k}=R^{(7)}_{j,k}=R^{(1)}_{\bm\Psi,j,\kph}=R^{(6)}_{\bm\Psi,j,\kph}=R^{(7)}_{\bm\Psi,j,\kph}=0,\\
&R^{(2)}_{j,k}=
\left\{\begin{aligned}
&-a_{j,k}\sigma_{j,k}\,\xbar B_{j,k}\dy&&\mbox{if}~(h_y)_{j,k}=0,\\
&-\sigma_{j,k}\,\xbar B_{j,k}\left(\frac{(\xbar{ha})_{j,k}(h_y)_{j,k}-\,\xbar h_{j,k}((ha)_y)_{j,k}}{((h_y)_{j,k})^2}\,
\ln\bigg(\frac{h^{\rm N}_{j,k}}{h^{\rm S}_{j,k}}\bigg)+\frac{((ha)_y)_{j,k}\dy}{(h_y)_{j,k}}\right)&&\mbox{otherwise},
\end{aligned}\right.\\[0.7ex]
&R^{(3)}_{j,k}=\left\{\begin{aligned}
&-b_{j,k}\sigma_{j,k}\,\xbar B_{j,k}\dy&&\mbox{if}~(h_y)_{j,k}=0,\\
&-\sigma_{j,k}\,\xbar B_{j,k}\left(\frac{(\xbar{hb})_{j,k}(h_y)_{j,k}-\,\xbar h_{j,k}\sigma_{j,k}\,\xbar B_{j,k}}{((h_y)_{j,k})^2}\,
\ln\bigg(\frac{h^{\rm N}_{j,k}}{h^{\rm S}_{j,k}}\bigg)+\frac{\sigma_{j,k}\,\xbar B_{j,k}\dy}{(h_y)_{j,k}}\right)&&\mbox{otherwise},
\end{aligned}\right.\\
&R^{(4)}_{j,k}=-u_{j,k}\sigma_{j,k}\,\xbar B_{j,k}\dy,\quad R^{(5)}_{j,k}=-v_{j,k}\sigma_{j,k}\,\xbar B_{j,k}\dy,\\
&R^{(2)}_{\bm\Psi,j,\kph}=
\left\{\begin{aligned}
&-\hf\left(a^{\rm N}_{j,k}+a^{\rm S}_{j,k+1}\right)[hb]_{j,\kph}&&\mbox{if}~[h]_{j,\kph}=0,\\
&-[hb]_{j,\kph}\left(\frac{(ha)^{\rm N}_{j,k}[h]_{j,\kph}-h^{\rm N}_{j,k}[ha]_{j,\kph}}{[h]_{j,\kph}^2}\,
\ln\bigg(\frac{h^{\rm S}_{j,k+1}}{h^{\rm N}_{j,k}}\bigg)+\frac{[ha]_{j,\kph}}{[h]_{j,\kph}}\right)&&\mbox{otherwise},
\end{aligned}\right.\\
&R^{(3)}_{\bm\Psi,j,\kph}=\left\{\begin{aligned}
&-\hf\left(b^{\rm N}_{j,k}+b^{\rm S}_{j,k+1}\right)[hb]_{j,\kph}&&\mbox{if}~[h]_{j,\kph}=0,\\
&-[hb]_{j,\kph}\left(\frac{(hb)^{\rm N}_{j,k}[h]_{j,\kph}-h^{\rm N}_{j,k}[hb]_{j,\kph}}{[h]_{j,\kph}^2}\,
\ln\bigg(\frac{h^{\rm S}_{j,k+1}}{h^{\rm N}_{j,k}}\bigg)+\frac{[hb]_{j,\kph}}{[h]_{j,\kph}}\right)&&\mbox{otherwise},
\end{aligned}\right.\\
&R^{(4)}_{\bm\Psi,j,\kph}=-\hf\left(u^{\rm N}_{j,k}+u^{\rm S}_{j,k+1}\right)[hb]_{j,\kph},\quad
R^{(5)}_{\bm\Psi,j,\kph}=-\hf\left(v^{\rm N}_{j,k}+v^{\rm S}_{j,k+1}\right)[hb]_{j,\kph}.
\end{align*}
where
$$
\begin{aligned}
&a^{\rm N(S)}_{j,k}:=\frac{(ha)^{\rm N(S)}_{j,k}}{h^{\rm N(S)}_{j,k}},\quad
b^{\rm N(S)}_{j,k}:=\frac{(hb)^{\rm N(S)}_{j,k}}{h^{\rm N(S)}_{j,k}},\quad[h]_{j,\kph}:=h^{\rm S}_{j,k+1}-h^{\rm N}_{j,k},\\
&[ha]_{j,\kph}:=(ha)^{\rm S}_{j,k+1}-(ha)^{\rm N}_{j,k},\quad[hb]_{j,\kph}:=(hb)^{\rm S}_{j,k+1}-(hb)^{\rm N}_{j,k}.
\end{aligned}
$$

\subsection{Numerical Examples}\label{sec33}
In this section, we apply the proposed PCCU scheme to the 2-D shallow water MHD equations. In all of the examples, the CFL number is set to
0.25 and the minmod parameter is set to $\theta=1.3$.

\paragraph{Example 5---Orszag-Tang-Like Problem.} This example taken from \cite{Duan2021High,Zia2014Numerical} is similar to that of the
ideal MHD Orszag-Tang problem studied in Example 2.

The shallow water MHD system is considered in the computational domain $[0,2\pi]\times[0,2\pi]$ subject to the periodic boundary conditions
in both $x$ and $y$-directions and the following smooth initial data:
\begin{equation*}
(h,u,v,a,b)(x,y,0)=\left(\frac{25}{9},-\sin y,\sin x,-\sin y,\sin(2x)\right).
\end{equation*}

We compute the numerical solution by the proposed PCCU scheme on a uniform $200\times200$ mesh until the final time $t=2$. Time snapshots of
$h$ and $\sqrt{a^2+b^2}$ at $t=1$ and 2 are plotted in Figure \ref{fig6}. As one can see, the initially smooth solution breaks down and
develops multiple shock waves, whose interaction leads to the appearance of many essential features of MHD turbulence. We observe that the
obtained results are in good agreement with those reported in \cite{Duan2021High,Zia2014Numerical}.
\begin{figure}[ht!]
\centerline{\includegraphics[trim=1.4cm 1.1cm 1.4cm 0.5cm, clip, width=5.3cm]{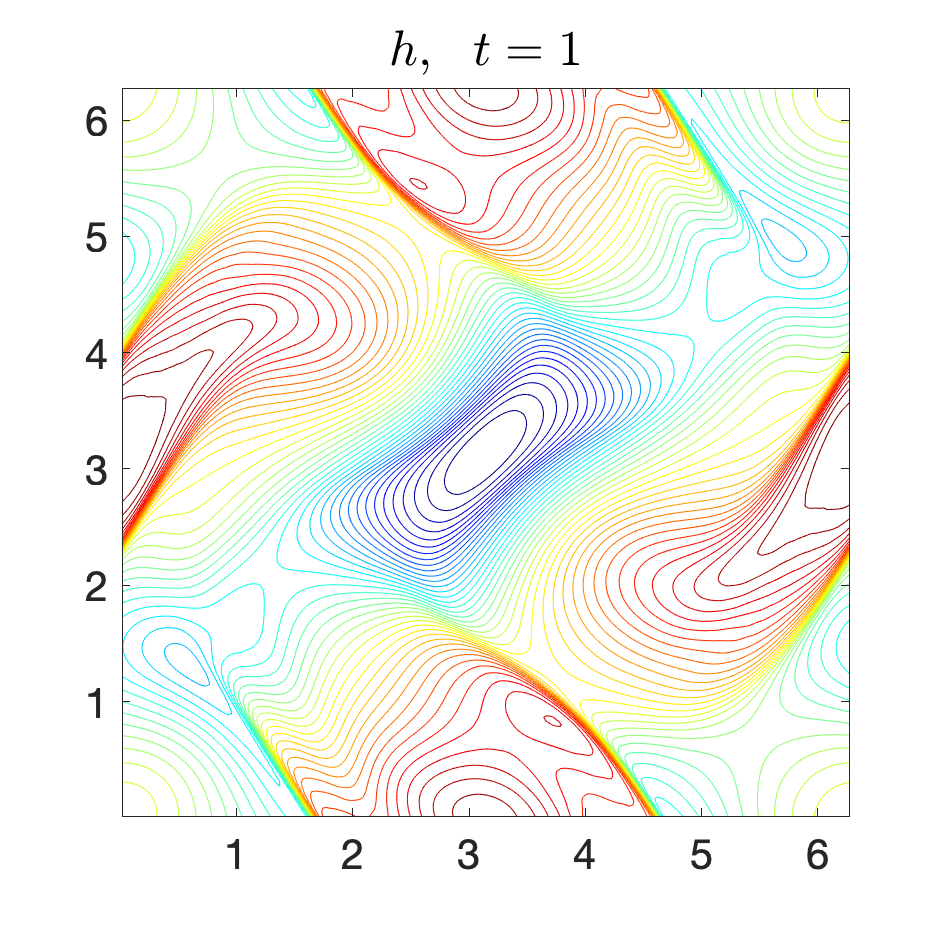}\hspace*{0.7cm}
            \includegraphics[trim=1.4cm 1.1cm 1.4cm 0.5cm, clip, width=5.3cm]{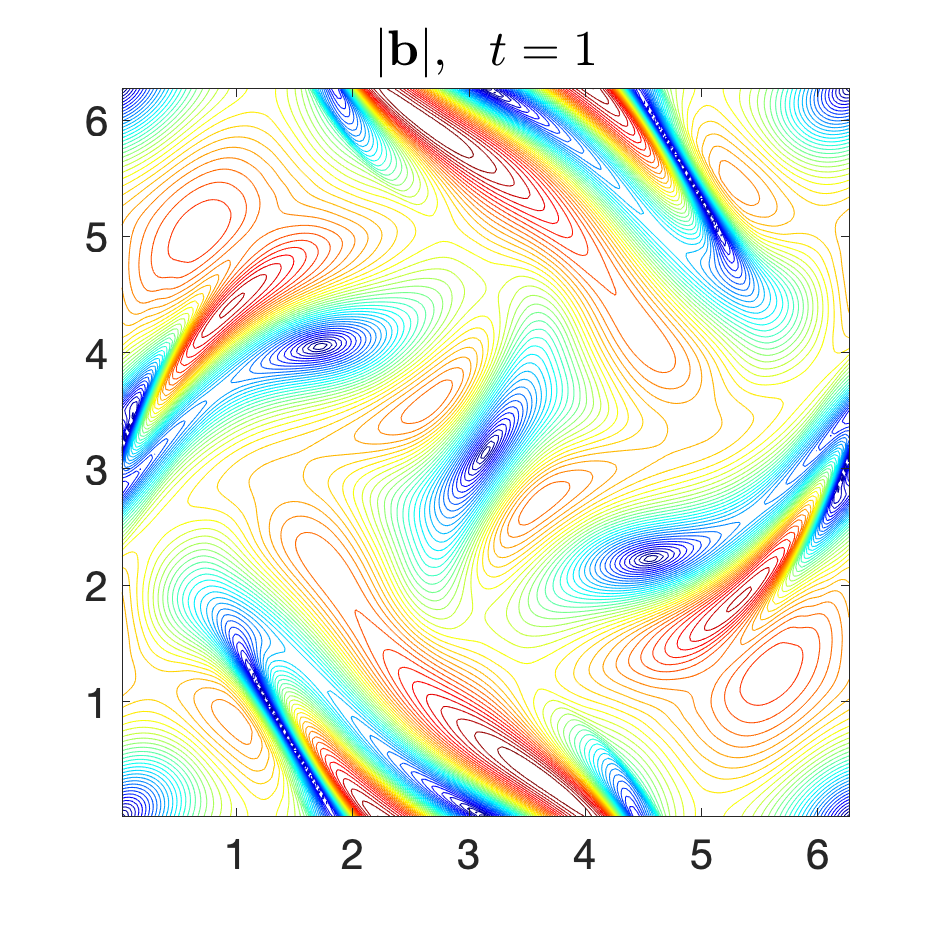}}
\vskip8pt
\centerline{\includegraphics[trim=1.4cm 1.1cm 1.4cm 0.5cm, clip, width=5.3cm]{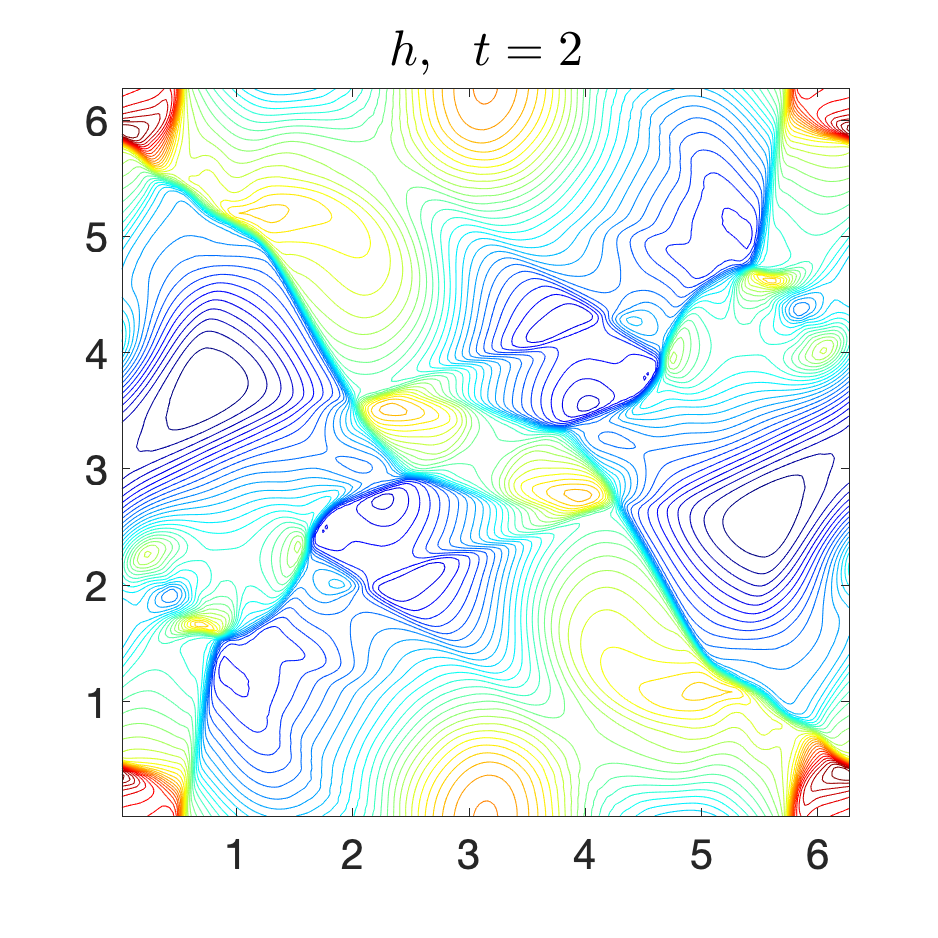}\hspace*{0.7cm}
            \includegraphics[trim=1.4cm 1.1cm 1.4cm 0.5cm, clip, width=5.3cm]{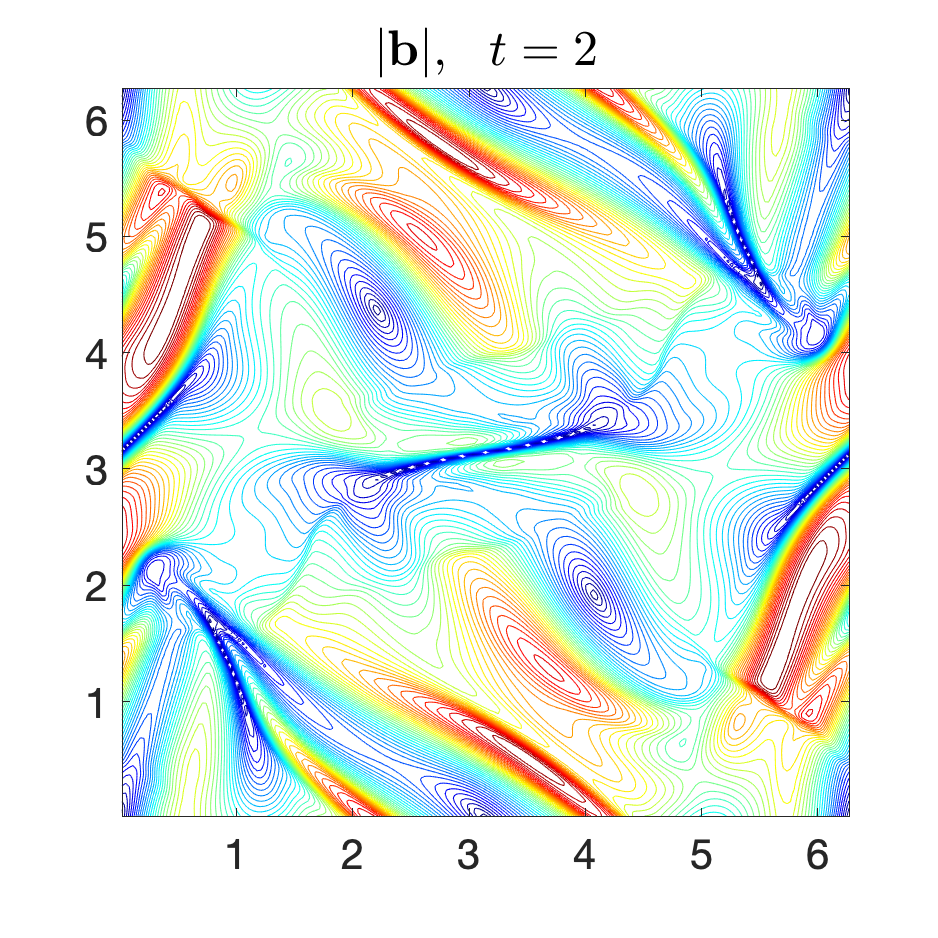}}
\caption{\sf Example 5: Fluid thickness $h$ and magnetic field magnitude $\sqrt{a^2+b^2}$ computed by the proposed PCCU scheme at $t=1$ (top
row) and 2 (bottom row). 40 equally spaced contours are used in each plot.\label{fig6}}
\end{figure}

\paragraph{Example 6---Rotor-Like Problem.} Next, we consider a rotor-like problem taken from \cite{Duan2021High,Kroger2005evolution}. This
benchmark, which is an extension of the ideal MHD rotor problem studied in Example 3, portrays a disk with radius 0.1 of significant fluid
depth $h$ rotating in a magnetic field. 

The initial data
\begin{equation*}
(h,u,v,ha,hb)=\begin{cases}(10,-y,x,1,0),&\sqrt{x^2+y^2}<0.1,\\(1,0,0,1,0),&\mbox{otherwise},\end{cases}
\end{equation*}
are prescribed in the computational domain $[-1,1]\times[-1,1]$ and zero-order extrapolation boundary conditions are set along its boundary.
The solution computed by the proposed PCCU scheme on a uniform $200\times200$ mesh at time $t=0.2$ is plotted Figure \ref{fig7}. The
obtained results are oscillation-free and overall comparable to those reported in \cite{Duan2021High,Kroger2005evolution}.
\begin{figure}[ht!]
\centerline{\includegraphics[trim=0.6cm 1.1cm 1.4cm 0.5cm, clip, width=5.3cm]{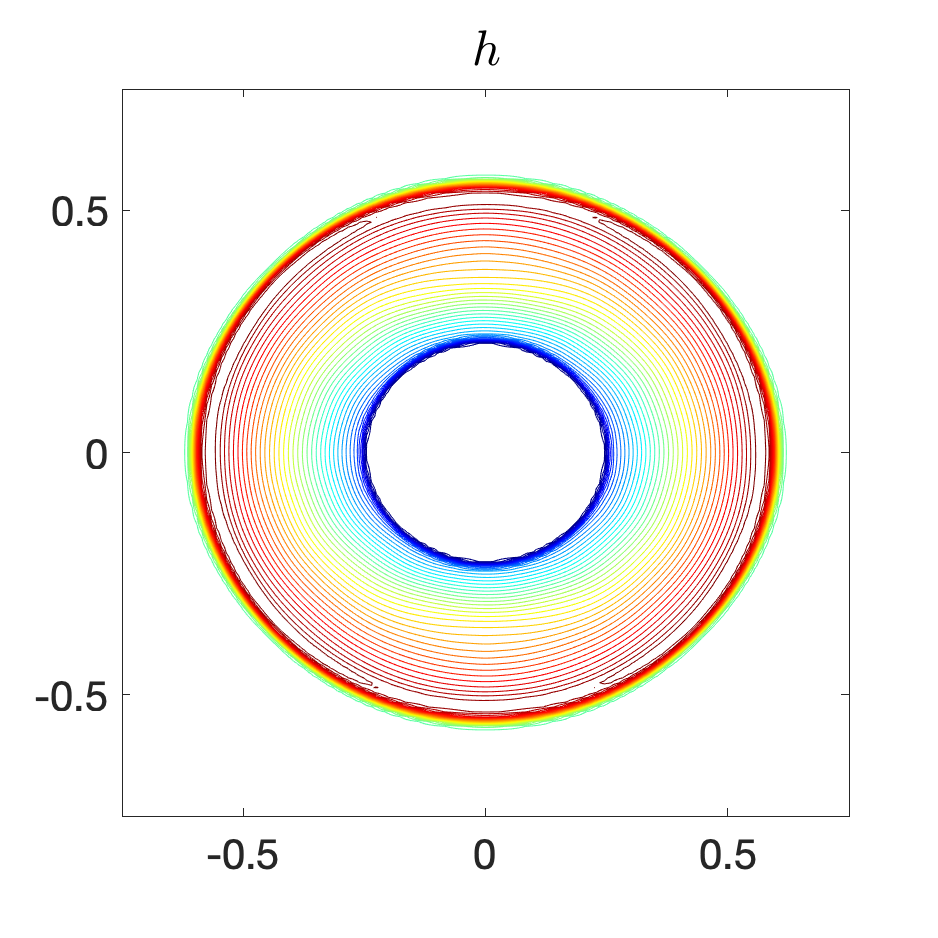}\hspace*{0.3cm}
            \includegraphics[trim=0.6cm 1.1cm 1.4cm 0.5cm, clip, width=5.3cm]{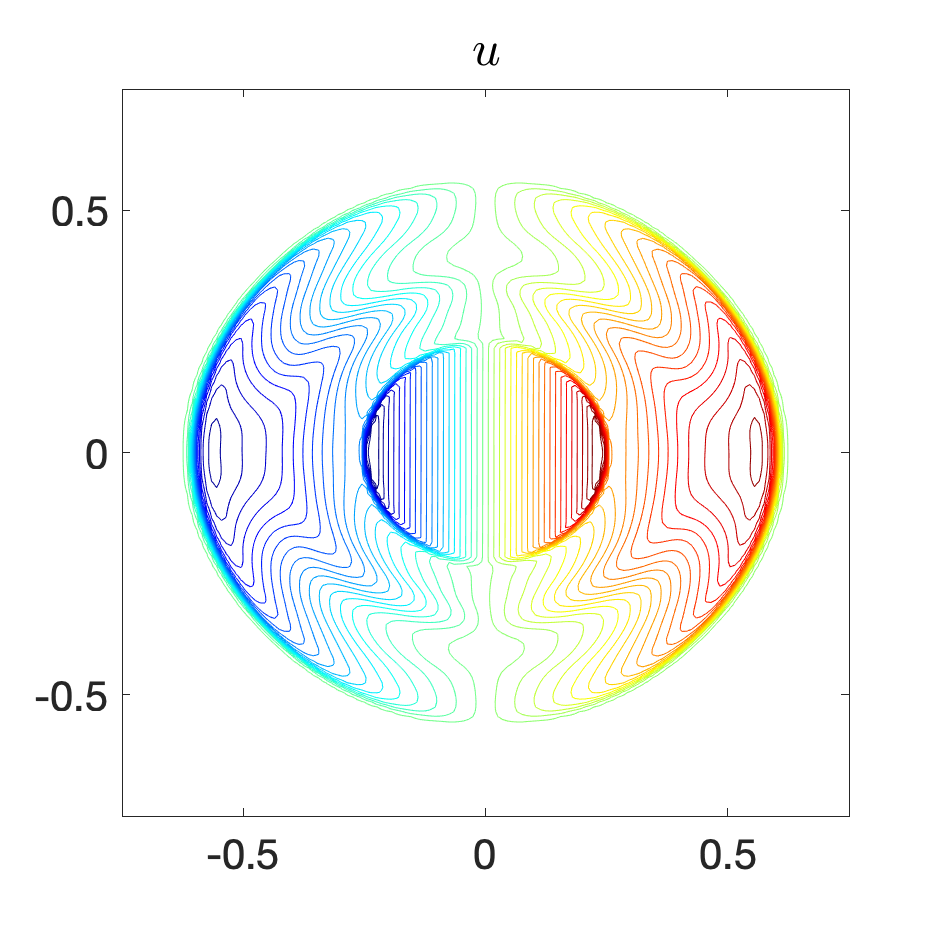}\hspace*{0.3cm}
            \includegraphics[trim=0.6cm 1.1cm 1.4cm 0.5cm, clip, width=5.3cm]{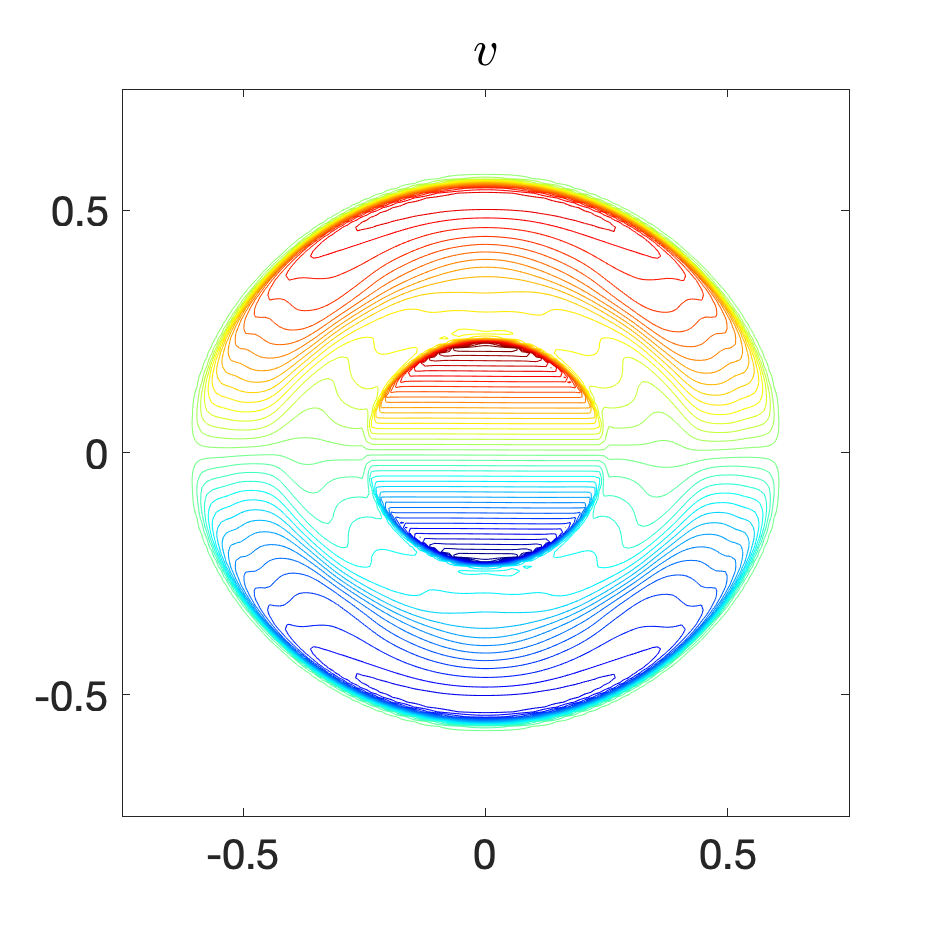}}
\vskip8pt
\centerline{\includegraphics[trim=0.6cm 1.1cm 1.4cm 0.5cm, clip, width=5.3cm]{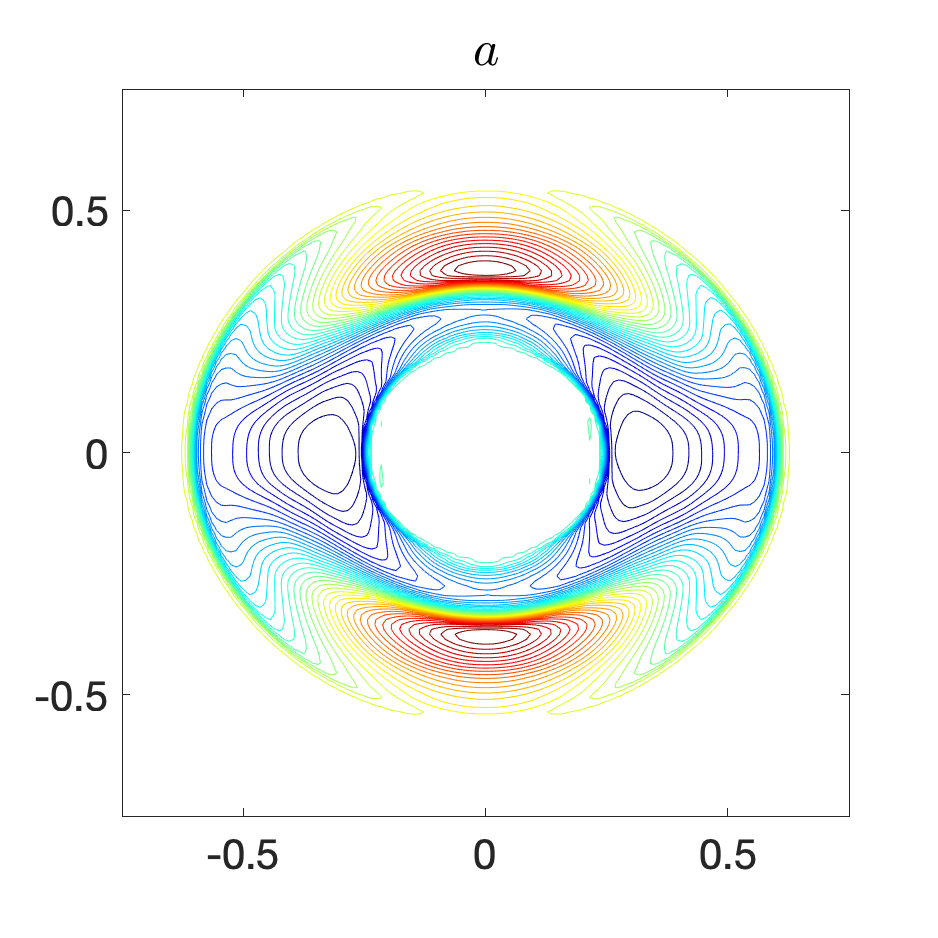}\hspace*{0.3cm}
            \includegraphics[trim=0.6cm 1.1cm 1.4cm 0.5cm, clip, width=5.3cm]{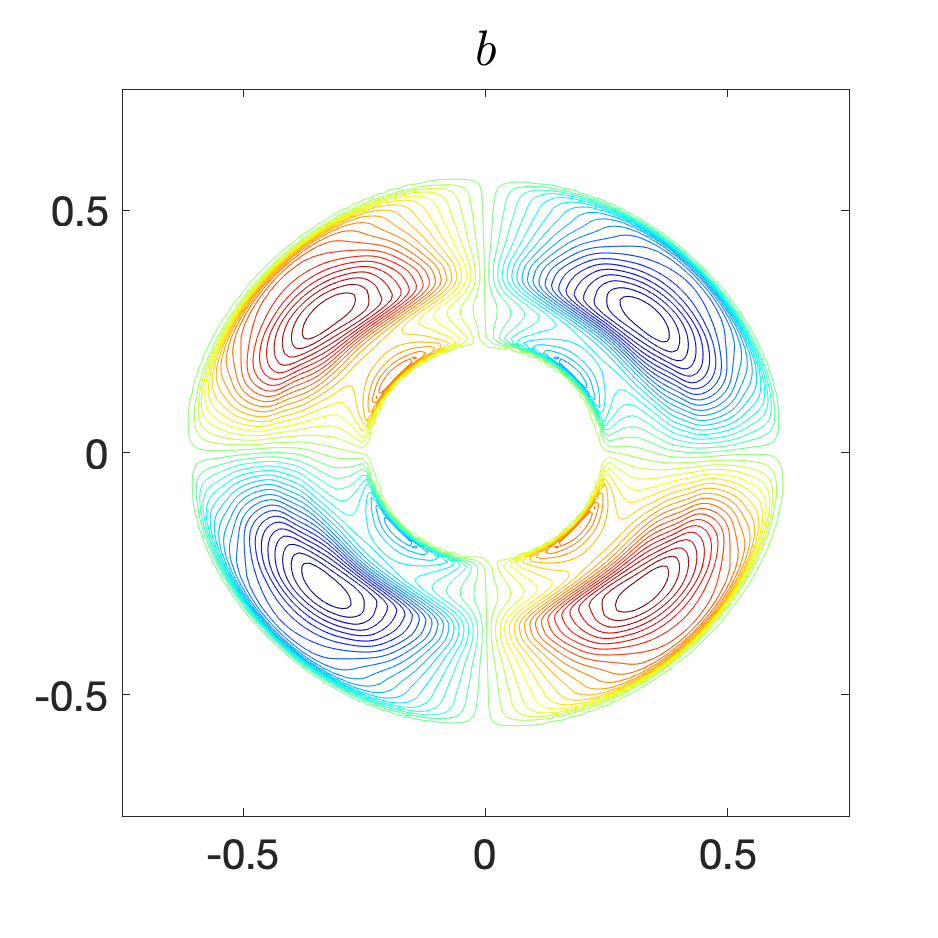}}
\caption{\sf Example 6: Fluid thickness $h$, velocities $u$ and $v$, reduced magnetic field components $a$ and $b$ computed by the proposed
PCCU scheme. 40 equally spaced contours are used in each plot.\label{fig7}}
\end{figure}

\paragraph{Example 7---Explosion Problem.} In the final example, we numerically solve the explosion problem studied in
\cite{Kroger2005evolution,Touma2010Unstaggered}. This is another benchmark for the shallow water MHD equations considered subject to the
following initial data: 
\begin{equation*}
(h,u,v,a,b)(x,y,0)=\begin{cases}(1,0,0,0.1,0),&\sqrt{x^2+y^2}<0.3,\\(0.1,0,0,1,0),&\mbox{otherwise}.\end{cases}
\end{equation*}
In this example, we take the computational domain $[-1,1]\times[-1,1]$ and implement the zero-order extrapolation boundary conditions along
its boundary.

The solution of the explosion problem consists of a shock traveling away from the center, a rarefaction wave traveling toward the origin,
and two Alfv\'en waves. We compute the solution by the proposed PCCU scheme on a uniform $200\times200$ mesh. The obtained results, shown in
Figure \ref{fig8} at $t=0.25$, are non-oscillatory and agree well with the corresponding results in
\cite{Kroger2005evolution,Touma2010Unstaggered}.
\begin{figure}[ht!]
\centerline{\includegraphics[trim=0.6cm 1.1cm 1.4cm 0.5cm, clip, width=5.3cm]{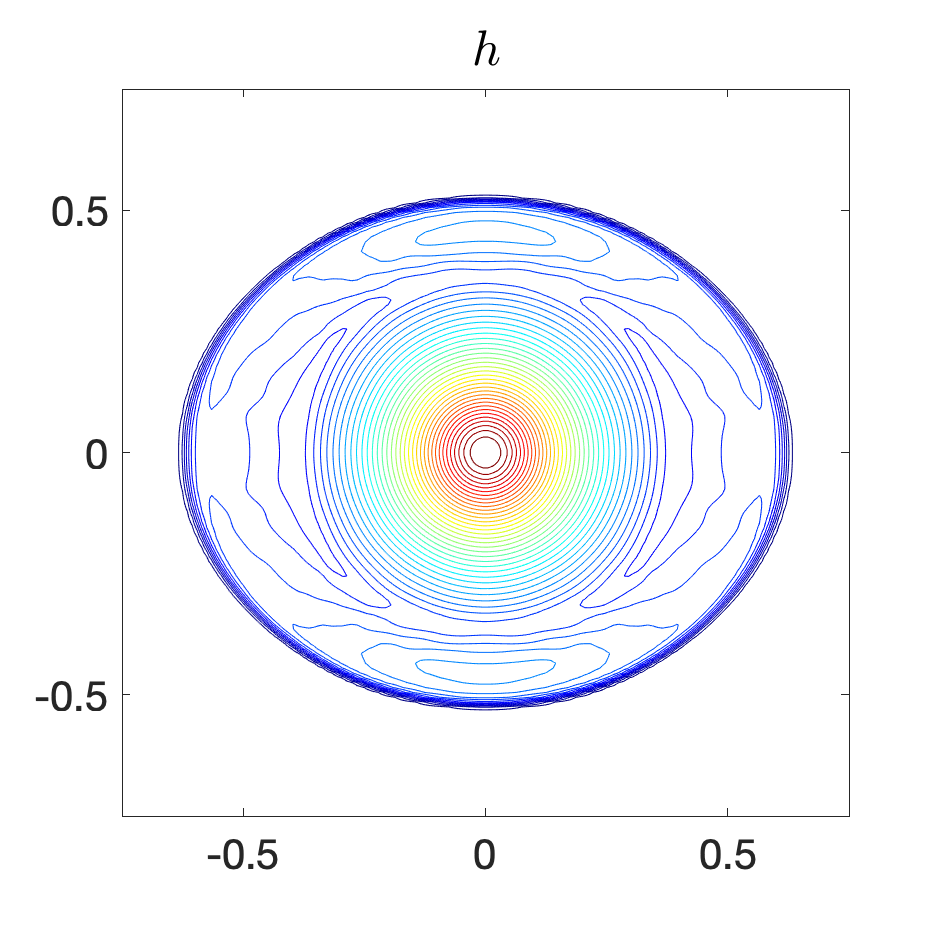}\hspace*{0.3cm}
            \includegraphics[trim=0.6cm 1.1cm 1.4cm 0.5cm, clip, width=5.3cm]{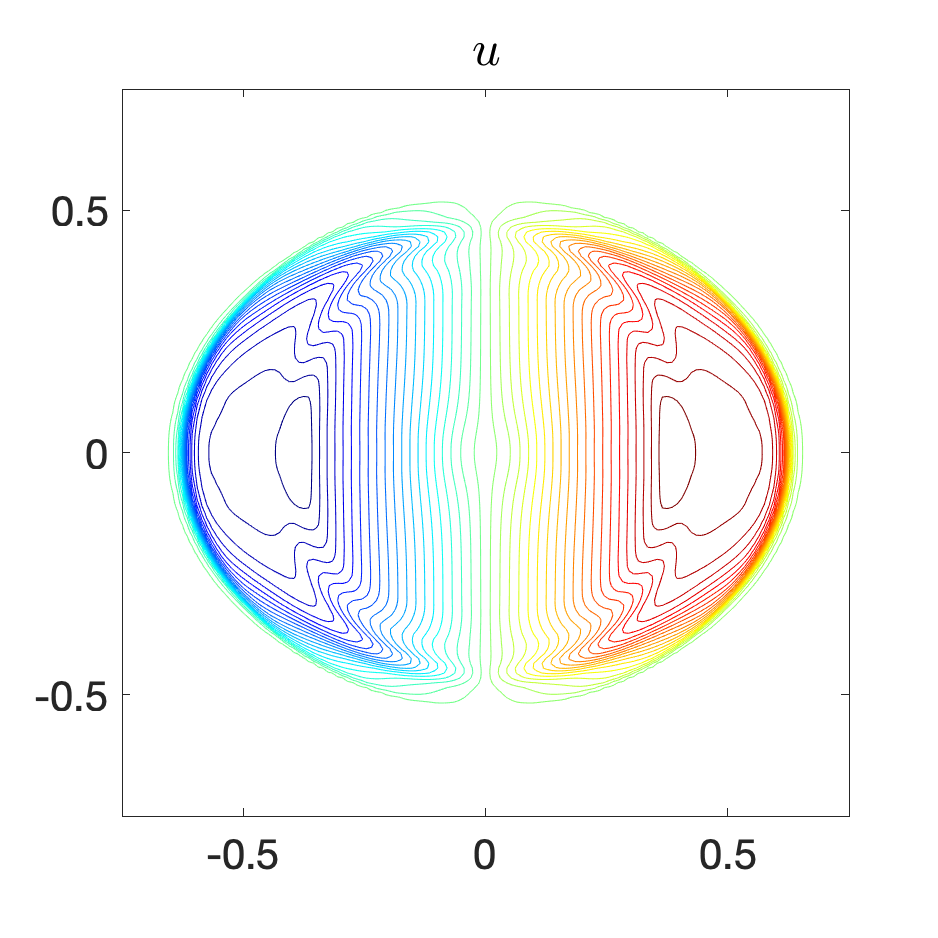}\hspace*{0.3cm}
            \includegraphics[trim=0.6cm 1.1cm 1.4cm 0.5cm, clip, width=5.3cm]{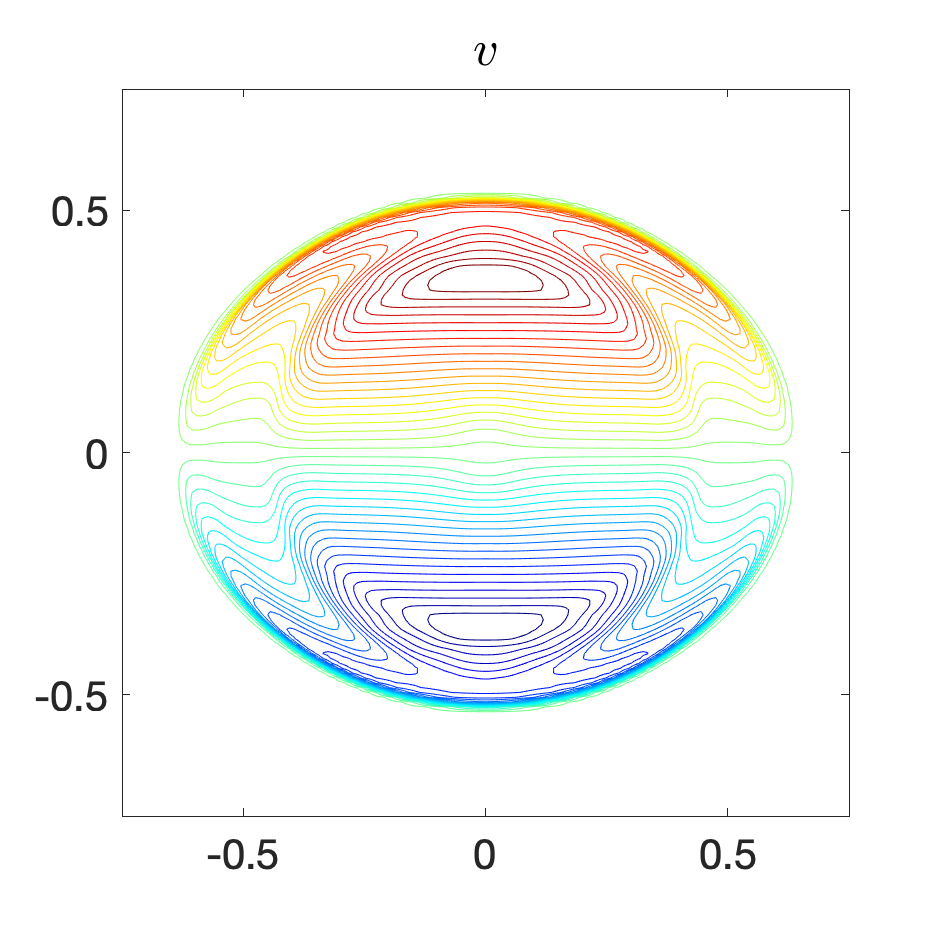}}
\vskip8pt
\centerline{\includegraphics[trim=0.6cm 1.1cm 1.4cm 0.5cm, clip, width=5.3cm]{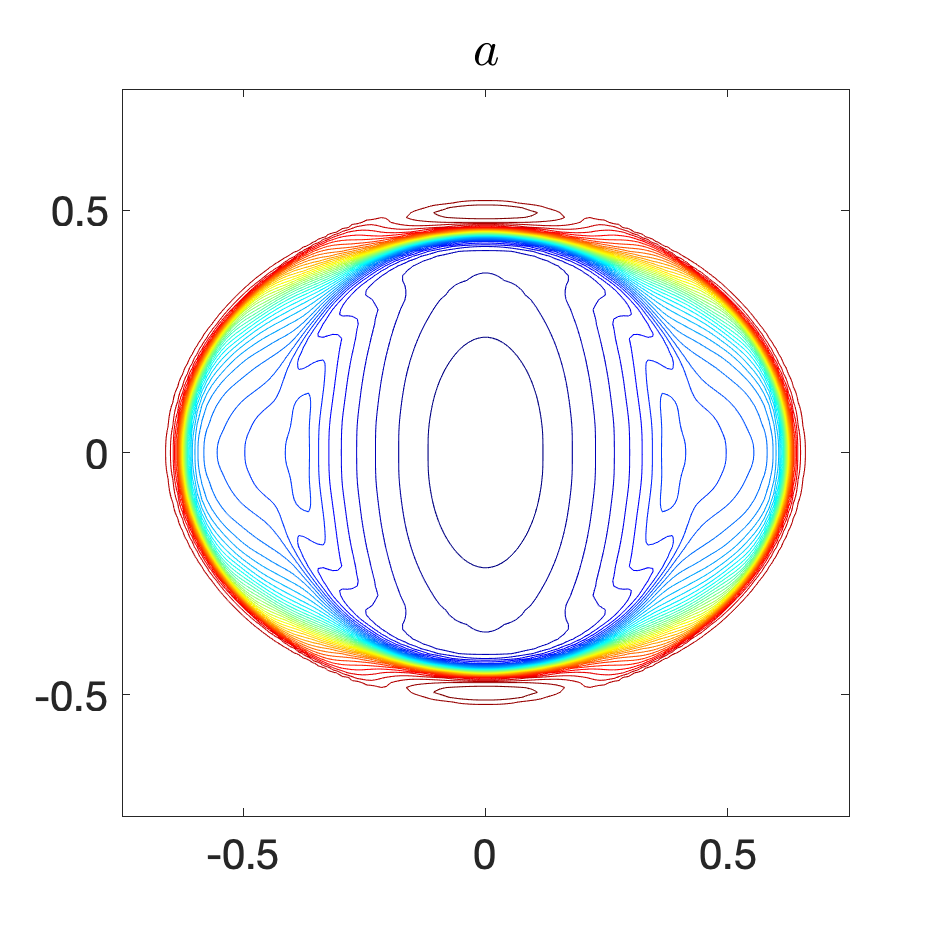}\hspace*{0.3cm}
            \includegraphics[trim=0.6cm 1.1cm 1.4cm 0.5cm, clip, width=5.3cm]{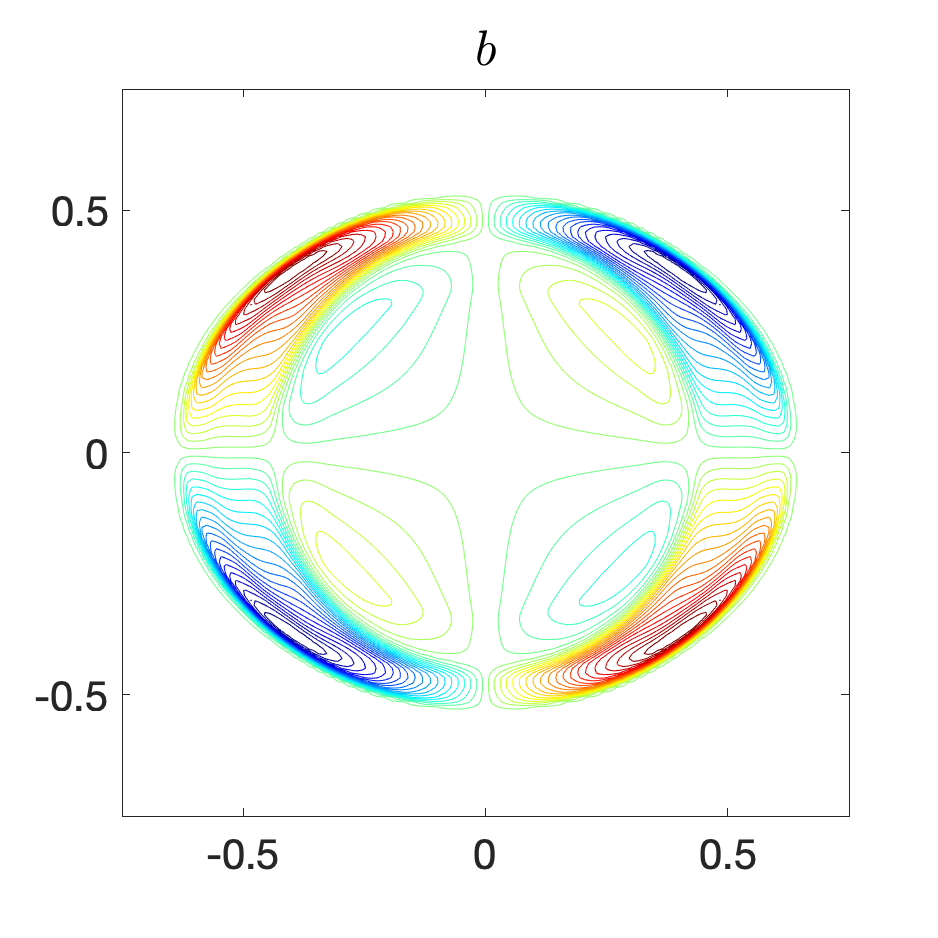}}
\caption{\sf Example 7: Fluid thickness $h$, velocities $u$ and $v$, reduced magnetic field components $a$ and $b$ computed by the proposed
PCCU scheme. 40 equally spaced contours are used in each plot.\label{fig8}}
\end{figure}

\section{Conclusion}\label{sec4}
In this paper, we have developed a new second-order unstaggered path-conservative central-upwind (PCCU) scheme for the ideal and shallow
water magnetohydrodynamics (MHD) systems. The proposed scheme is (i) locally divergence-free; (ii) Riemann-problem-solver-free; (iii)
high-resolution; (iv) robust; and (v) non-oscillatory. The derivation of the scheme is based on the Godunov-Powell nonconservative
modifications of the studied MHD systems. The local divergence-free property is enforced by augmenting the studied systems with the
evolution equations for the corresponding derivatives of the magnetic field components and by using these evolved quantities in the design
of a special piecewise linear reconstruction of the magnetic field, which also guarantees a non-oscillatory nature of the resulting scheme.
In addition, the proposed PCCU scheme allows for a proper treatment of the nonconservative product terms, which takes into account jumps of
the normal component of the magnetic field across cell interfaces, thus providing stability. The performance of the new scheme has been
illustrated on several benchmarks for both ideal and shallow water MHD systems producing high-resolution and oscillation-free results with
positive computed quantities such as density, pressure, and water depth.

In our future work, we plan to develop a provably positivity-preserving high-order PCCU scheme as well as to introduce a new well-balanced
PCCU scheme for more general shallow water MHD systems with the nonflat bottom topography and Coriolis forces are taken into account.

\section*{Acknowledgments}
The work of A. Chertock and M. Redle were partially supported by NSF grants DMS-1818684 and DMS-2208438. The work of A. Kurganov was
partially supported by NSFC grants 12111530004 and 12171226, and by the fund of the Guangdong Provincial Key Laboratory of Computational
Science and Material Design (No. 2019B030301001). The work of K. Wu was partially supported by the NSFC grant 12171227. The authors would like 
to express their gratitude to Prof. Vladimir Zeitlin from the Laboratory of Dynamical Meteorology, Sorbonne University, Ecole Normale Sup\'erieure, CNRS, Paris, for motivating and fruitful discussions. 

\bibliographystyle{siam}
\bibliography{biblio}
\end{document}